\newcommand{\be}{\begin{equation}}
\newcommand{\ee}{\end{equation}}
\newcommand{\beq}{\begin{eqnarray}}
\newcommand{\eeq}{\end{eqnarray}}
\newtheorem{thm}{Theorem}[section]
\newtheorem{lma}[thm]{Lemma}
\newtheorem{prop}[thm]{Proposition}
\theoremstyle{remark}
\newtheorem{rem}[thm]{Remark}
\numberwithin{equation}{section}
\def\be{\begin{equation}}
\def\ee{\end{equation}}
\def\bee{\begin{equation*}}
\def\eee{\end{equation*}}
\def\lf{\left}
\def\ri{\right}
\def\Ric{\text{\rm Ric}}
\def\Rm{\text{\rm Rm}}
\def\p{\partial}
\def\rheat{\lf(\frac{\p}{\p t}-\Delta_{g(t)}\ri)}
\def\e{\varepsilon}
\def\a{{\alpha}}
\def\b{{\beta}}
\def\R{\mathrm{R}}\def\cR{\mathcal{R}}
\begin{document}
\title[]
{Three-manifolds with non-negatively pinched Ricci curvature}

\author{Man-Chun Lee}
\address[Man-Chun Lee]{Department of Mathematics, The Chinese University of Hong Kong, Shatin, N.T., Hong Kong
}
\email{mclee@math.cuhk.edu.hk}

\author{Peter M. Topping}
\address[Peter M. Topping]{Mathematics Institute, Zeeman Building, University of Warwick, Coventry CV4 7AL}
 \email{P.M.Topping@warwick.ac.uk}


\date{10 September 2022}

\begin{abstract}
We show that every complete non-compact three-manifold with non-negatively pinched Ricci curvature admits a complete Ricci flow solution for all positive time, with scale-invariant curvature decay and preservation of pinching. 
Combining with recent work of Lott and Deruelle-Schulze-Simon 
gives a proof of Hamilton's  pinching conjecture without additional hypotheses.
\end{abstract}

\keywords{}

\maketitle{}

\markboth{}{Three-manifolds with pinched Ricci curvature}
\section{Introduction}

Suppose $(M,g)$ is a smooth complete $n$-dimensional Riemannian manifold with a positive lower Ricci curvature bound $\Ric(g)\geq (n-1)r^{-2}>0$. 
The classical theorem of Bonnet-Myers tells us that the diameter of $(M,g)$ is then bounded above by $\pi r$, which is sharp on any round sphere.

In this paper we consider a situation in which the Ricci curvature is still non-negative, but does not have a uniform positive lower bound. Instead it is assumed to be controlled from below in terms of the scalar curvature $\cR$. 
Our goal is to extend recent work of Lott \cite{Lott2019} and Deruelle-Schulze-Simon \cite{DSS22} in order to prove 
Hamilton's pinching conjecture \cite[Conjecture 3.39]{CLN09}:
\begin{thm}[Hamilton's pinching conjecture]
\label{main_thm}
Suppose $(M^3,g_0)$ is a complete (connected) three-dimensional Riemannian manifold with $\Ric\geq \e \cR\geq 0$ for some $\e>0$. Then  $(M^3,g_0)$ is either flat or is compact.
\end{thm}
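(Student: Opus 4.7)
The compact case is the original content of Hamilton's pinching theorem, so it suffices to assume $(M^3,g_0)$ is complete, connected and non-compact, and to prove that it is flat. The plan is to construct a complete long-time Ricci flow $g(t)$ on $M^3 \times (0,\infty)$ with $g(t) \to g_0$ as $t \to 0^+$, along which the pinching is preserved in the form $\Ric(g(t)) \geq \varepsilon' \cR(g(t)) \geq 0$ for some $\varepsilon' > 0$ depending only on $\varepsilon$, and which satisfies a scale-invariant curvature decay $t\,|\Rm(g(t))| \leq C(\varepsilon)$ on all of $M \times (0,\infty)$. Once such a flow exists, the recent results of Lott and of Deruelle--Schulze--Simon on long-time asymptotics of Ricci flow and on pinched three-dimensional expanding gradient Ricci solitons will force $g_0$ to be flat.

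To build the flow I would exhaust $M$ by relatively compact domains $\Omega_j \subset\subset M$ and produce a sequence of complete initial metrics $g_{0,j}$, each agreeing with $g_0$ on $\Omega_j$ and modified outside to have bounded curvature while retaining the pinching $\Ric \geq (\varepsilon/2)\cR \geq 0$. Shi's theorem then gives complete Ricci flows $g_j(t)$ with bounded curvature on short time intervals. Pinching is an ODE-invariant condition under the Ricci reaction equation in dimension three (the cone $\{\Ric - \varepsilon\cR g \geq 0\}$ is preserved by Hamilton's matrix ODE for the curvature operator), so the tensor maximum principle for complete flows with bounded curvature preserves pinching along each $g_j$. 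The crux is a uniform scale-invariant bound $t\,|\Rm(g_j(t))| \leq C(\varepsilon)$ on some interval $[0,T(\varepsilon)]$ independent of $j$. I would approach this by contradiction: a failure of such a bound, after pointed parabolic rescaling and passing to a Cheeger--Hamilton--Gromov limit, yields a complete non-flat ancient Ricci flow in dimension three satisfying $\Ric \geq \varepsilon\cR \geq 0$, and combining this pinching with Hamilton--Ivey and the structure theory for three-dimensional ancient solutions should force the limit to split off a line or be compact, contradicting the blow-up construction.

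Once the uniform scale-invariant bound is in hand, one extracts a subsequential limit as $j \to \infty$ and obtains the desired flow on $M \times (0,T(\varepsilon))$; iterating on successive time intervals, with the preserved pinching on a later time slice playing the role of fresh initial data, extends the flow to $(0,\infty)$. To conclude, I would examine the parabolic blow-downs $g_s(t) := s^{-1} g(st)$ as $s \to \infty$: the scale-invariant bound and preserved pinching yield uniform curvature control, and by the results of Lott the resulting Cheeger--Hamilton--Gromov limit is self-similar, that is, an expanding gradient Ricci soliton of dimension three with $\Ric \geq \varepsilon' \cR \geq 0$. The classification of Deruelle--Schulze--Simon identifies the only such soliton as flat $\mathbb{R}^3$, so the asymptotic volume ratio of $(M,g_0)$ is Euclidean, and Bishop--Gromov rigidity under $\Ric \geq 0$ forces $(M^3,g_0)$ itself to be flat.

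The main obstacle is the construction of the flow, specifically the simultaneous preservation of pinching on a non-compact manifold and the scale-invariant curvature decay starting from initial data whose curvature may be unbounded. Short-time existence requires bounded curvature, so the approximation scheme is unavoidable, but earlier constructions of Ricci flow on complete non-compact three-manifolds with non-negative Ricci do not automatically transport pinching through the limit. The new technical input needed is a blow-up analysis that uses the pinching in an essential way to rule out non-flat ancient limits of the approximating flows, thereby producing the scale-invariant bound that drives the rest of the proof.
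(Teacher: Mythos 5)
The overall architecture---construct a complete long-time Ricci flow from $g_0$ that preserves (an approximate form of) the pinching and satisfies scale-invariant curvature decay, then invoke Lott/Deruelle--Schulze--Simon---is the same as the paper's. But your proposed route to that flow has two genuine gaps, and your final step is more roundabout than it needs to be.

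The first and most serious gap is your construction of the approximating initial metrics: you ask for complete $g_{0,j}$ of bounded curvature that agree with $g_0$ on $\Omega_j$ \emph{and still satisfy the pinching} $\Ric\geq(\e/2)\cR\geq 0$ outside. There is no known procedure that does this. Hochard's conformal completion lemma (the tool the paper uses) produces a complete bounded-curvature extension of $g_0$ near $\Omega_j$, but it makes no attempt to preserve pinching on the modified collar, and one should not expect it to; non-negativity of Ricci, and a fortiori pinching, is not stable under the boundary-layer conformal changes involved. The paper deliberately gives up on global pinching of the approximants and instead proves a \emph{local} almost-persistence of pinching (Lemma \ref{SecEst-almost-pinching}, via Chen--Zhu's eigenvalue evolution and the Lee--Tam local maximum principle), which applies in the interior where the initial metric was untouched. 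This is a key structural difference: the paper never has a complete, globally pinched, bounded-curvature approximation to run a global tensor maximum principle on.

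The second gap is in your proposed contradiction argument for the uniform $t|\Rm|\leq C(\e)$ bound. You suggest that failure of the bound produces, after parabolic rescaling, a non-flat complete ancient solution with pinching, and then appeal to Hamilton--Ivey and the structure theory of ancient 3d solutions. But the approximating flows $g_j$ exist only for short times (and those times a priori shrink with $j$), so you do not have the backward time to pass to an ancient limit without first proving a curvature decay; this is circular. You also need an injectivity radius lower bound (or a workaround) to extract any Cheeger--Hamilton--Gromov limit. The paper handles all of this via its Lemma \ref{SecEst-pinching-curvatureEstimate}: a point-picking argument that only produces a limit flow over a short backward time interval, works with local lifts by the exponential map to sidestep injectivity radius control, and uses \emph{almost-roundness} (established beforehand in Lemma \ref{SecEst-pinching-Implies-spherical} by a Hamilton-style calculation with the modified quantity $|\Ric|^2(\cR+4)^{-\sigma}-\tfrac13(\cR+4)^{2-\sigma}$) to force the limit at $t=0$ to be round, which then contradicts non-compactness by Bonnet--Myers. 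Your proposal skips the almost-roundness step entirely, which is precisely what makes the blow-up tractable. Finally, the Hochard/Simon--Topping ``restart'' induction that converts these local a priori estimates into a \emph{uniform} local existence time $T(\e)$ is also missing from your sketch and is essential.

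Your final step is also heavier than necessary. Once you have the complete long-time flow with pinching and $|\Rm|\leq a/t$, you do not need to blow down, extract an expanding soliton, classify it, and then appeal to Bishop--Gromov rigidity. For any $t_0>0$ the slice $(M,g(t_0))$ is a complete non-compact pinched manifold of bounded curvature, so it falls squarely under \cite[Theorem 1.3]{DSS22}, which directly gives flatness of $g(t_0)$, hence of $g_0$. The paper uses exactly this shortcut.

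\end{document}
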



This conjecture is the intrinsic analogue of a theorem of Hamilton \cite{Hamilton1994}
in which $\Ric$ and $\cR$ are replaced by the second fundamental form and mean curvature of a hypersurface of Euclidean space, respectively.
Partial results towards the conjecture are available in the presence of additional upper and lower curvature bound assumptions. 
B.-L. Chen and X.-P. Zhu \cite{ChenZhu2000} proved that if one assumes additionally that 
\begin{enumerate}
\item
$|\Rm(g_0)|\leq \Lambda$ for some $\Lambda>0$ and
\item
$\mathrm{K}(g_0)\geq 0$, i.e. all sectional curvatures are nonnegative,
\end{enumerate}
then the conjecture is true. Later,  J. Lott \cite{Lott2019} presented an alternative method to prove the conjecture under the weaker additional assumptions that
\begin{enumerate}
\item
$|\Rm(g_0)|\leq \Lambda$ for some $\Lambda>0$ and
\item
$\mathrm{K}(g_0)\geq -\frac{C_0}{d^2_{g_0}(\cdot,p))}$ for some $C_0>0$ and $p\in M$, i.e. 
the sectional curvatures have an inverse quadratic lower bound.
\end{enumerate}
The weakening of the lower sectional curvature bound moving from Chen-Zhu's work to Lott's work is significant because it blocks the use of Hamilton's Harnack estimate, and so new ideas are required, some of which are employed in the final resolution of Hamilton's conjecture. 
One notable consequence of Lott's work is that a complete, non-compact 
three-manifold 
satisfying the Ricci pinching condition $\Ric\geq \e \cR\geq 0$ (for some $\e>0$)
and with bounded curvature is found either to be flat or to have cubic volume growth.

Very recently, using Lott's work as a foundation,
Deruelle-Schulze-Simon \cite{DSS22} have made the decisive step of dramatically weakening the additional hypotheses to only require additionally that
\begin{enumerate}
\item
$|\Rm(g_0)|\leq \Lambda$ for some $\Lambda>0$.
\end{enumerate}

The purpose of this paper is to demonstrate that hypothesis (1) can be removed, giving the  result stated in Theorem \ref{main_thm}.

All of the prior works on this conjecture use Ricci flow.  As part of his approach, Lott proved that under his additional bounded curvature hypothesis 
one can solve the Ricci flow for all time with scale-invariant curvature decay and while preserving the pinching. In this paper we demonstrate that this is possible without requiring the bounded curvature assumption (1). 
Precisely, we prove:
\begin{thm}\label{SecIntro-RFexist}
Suppose $(M^3,g_0)$ is a complete non-compact three-dimensional Riemannian manifold with $\Ric(g_0)\geq \e \cR(g_0)\geq 0$ for some 
$\e>0$. 
Then there exists $a=a(\e)>0$ such that the Ricci flow has a complete long-time solution $g(t)$ starting from $g_0$ so that
\begin{enumerate}
\item[(a)] $\Ric(g(t))\geq \e \cR(g(t))\geq 0$;
\item [(b)] $|\Rm(g(t))|\leq at^{-1}$;
\end{enumerate}
for all $(x,t)\in M\times (0,+\infty)$. 
\end{thm}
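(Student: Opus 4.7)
Our plan is to remove the bounded-curvature hypothesis of Deruelle--Schulze--Simon \cite{DSS22} by an approximation argument. We construct a sequence of complete bounded-curvature initial data $g_0^{(i)}$ approximating $g_0$ smoothly on compact sets while essentially preserving the pinching condition $\Ric\geq\e\cR\geq 0$. Applying \cite{DSS22} to each $g_0^{(i)}$ yields a complete long-time Ricci flow $g_i(t)$ with preserved pinching and decay $|\Rm(g_i(t))|\leq A_i/t$. The heart of the argument is to make $A_i$ uniform in $i$, depending only on $\e$, so that a compactness argument delivers a limit flow $g(t)$ on $M\times(0,\infty)$ satisfying the conclusions (a) and (b).

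To construct the $g_0^{(i)}$ we combine a local-in-space Ricci flow smoothing, in the spirit of Hochard and Simon--Topping --- which flows a complete metric with only a lower Ricci bound for a short time on large balls, without any upper bound on curvature --- with a cutoff/gluing procedure outside the central region. Because the Ricci pinching $\Ric\geq\e\cR\geq 0$ is preserved on three-manifolds by Hamilton's ODE--PDE maximum principle applied to the evolution of $\Ric - \e\cR g$, the smoothed metric on the central region retains pinching with a loss that tends to zero, while outside a large ball we close things off to a bounded-curvature model without damaging the central pinching. This produces complete smooth initial data of bounded curvature to which \cite{DSS22} directly applies.

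Obtaining uniformity of the constant $A_i$ is, we anticipate, the central obstacle. The natural strategy is point-picking: were $A_i\to\infty$, choose $(x_i,t_i)$ at which the bound is nearly saturated, rescale so that $|\Rm(x_i,t_i)|=1$, and extract an ancient Ricci flow on a complete non-compact three-manifold with $\Ric\geq\e\cR\geq 0$ and bounded curvature. By the choice of point the rescaled limit cannot be flat, which contradicts Lott's analysis \cite{Lott2019} --- in this pinched regime a bounded-curvature ancient solution is forced to be either flat or to have a rigid cubic-volume-growth structure incompatible with the rescaled non-collapsing inherited from $\Ric(g_0)\geq 0$.

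Once the uniform bound $|\Rm(g_i(t))|\leq a/t$ is in place, injectivity radius estimates propagated from the nonnegative Ricci initial data allow Hamilton--Cheeger--Gromov compactness to extract a smooth limit Ricci flow $g(t)$ on $M\times(0,\infty)$ inheriting the pinching and the $a/t$ bound, giving (a) and (b). It remains to verify $g(t)\to g_0$ locally in $C^0$ as $t\downarrow 0$; one controls this by combining the non-increase of distances under nonnegative Ricci flow with an integrated distortion bound of the form $\int_0^t |\Ric(g(s))|\,ds\leq C\sqrt{t}$ deduced from the scale-invariant curvature estimate, and then passing to the limit $i\to\infty$ in the smooth convergence $g_0^{(i)}\to g_0$ to identify the limit flow as starting from $g_0$.
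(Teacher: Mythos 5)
Your proposal takes a genuinely different route than the paper, but it has a fatal circularity/obstruction at the very first step. You want to construct complete bounded-curvature approximations $g_0^{(i)}$ of $g_0$ that still satisfy $\Ric\geq\e\cR\geq 0$, and then apply Lott/Deruelle--Schulze--Simon to flow them. But if $g_0^{(i)}$ is complete, non-compact, has bounded curvature, and satisfies the \emph{exact} pinching $\Ric\geq\e\cR\geq 0$, then by Deruelle--Schulze--Simon's Theorem 1.3 it is flat, so it cannot approximate a non-flat $g_0$. You would therefore have to work with only approximate pinching (the cutoff/gluing layer will inevitably break the inequality), but Lott's long-time existence theorem and its $C/t$ decay are proved under exact pinching and do not apply. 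Rebuilding those conclusions under approximate pinching with uniform constants is precisely the hard content, and it is what the paper's Lemmas \ref{SecEst-almost-pinching}--\ref{SecEst-pinching-curvatureEstimate} supply; your sketch silently assumes them.

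The second place your argument is under-specified is the uniformity of $A_i$. Your contradiction argument (rescale at a bad point, extract an ancient limit, appeal to Lott) does not immediately deliver a contradiction: an ancient pinched bounded-curvature limit could be a compact round shrinking sphere, which is entirely consistent with Lott's dichotomy. One has to rule out compact limits and then get positive constant sectional curvature on arbitrarily large balls so that Bonnet--Myers contradicts non-compactness of the ambient manifold; this is exactly what the paper carries out in Lemma \ref{SecEst-pinching-curvatureEstimate}, via a careful multi-ball lifting by the exponential map and a quantitative roundness estimate (Lemma \ref{SecEst-pinching-Implies-spherical}). The paper's actual route avoids your circularity entirely: rather than invoking Lott/DSS to produce flows, it builds the flow from scratch by (1) Hochard's conformal completion plus Shi to get a short-time local flow, (2) local a priori estimates showing pinching persists, roundness develops, and hence the curvature decays at rate $a_1/t$, (3) an iterative restarting scheme giving local existence for a time $T(\e)$ independent of $g_0$, and (4) applying this to the blow-downs $R_i^{-2}g_0$ and scaling back up, so that the pinching error $-1$ shrinks to $-R_i^{-2}\to 0$ in the limit. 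The DSS theorem is used only \emph{after} Theorem \ref{SecIntro-RFexist} is proved, in the deduction of Theorem \ref{main_thm}. So your proposal is not a valid proof as stated; the ingredients you treat as available (pinching-preserving bounded-curvature approximations, uniform $A_i$ from Lott) are the theorem itself in disguise.
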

\begin{rem}
Prior to this result, even short-time existence in this situation was unknown. 
In the absence of the pinching constraint, short-time existence 
\emph{remains} an open problem. That is, given a complete three-manifold with nonnegative Ricci curvature, but no assumed overall curvature bound, it is uncertain as to whether there necessarily exists some Ricci flow continuation. Certainly the $a/t$ curvature decay cannot be true 
in general without the pinching hypothesis.
\end{rem}

Although the proof of Theorem \ref{SecIntro-RFexist} is independent of the ideas used by Lott to prove the bounded curvature case, there is one crucial additional property of Lott's solution that is automatically inherited by our solution from his work, namely that if there exists at least one point in space-time where the scalar curvature is positive then the flow has \emph{positive} asymptotic volume ratio $v_0>0$ at each time.
In practice, instead of trying to substitute our existence theory in place of the analogous existence result of Lott in the development of the theory, we can apply our existence theorem to reduce the full Hamilton pinching conjecture to the situation that has already been addressed, as follows.
\begin{proof}[{Proof of Theorem \ref{main_thm}}]
Suppose that $M^3$ is not compact. Then Theorem \ref{SecIntro-RFexist} applies
to give a complete Ricci flow evolution $g(t)$ that preserves the pinching condition and immediately has bounded curvature. Thus for each $t_0>0$ the manifolds 
$(M^3, g(t_0))$ satisfy the hypotheses of 
Deruelle-Schulze-Simon \cite[Theorem 1.3]{DSS22} and we can deduce that 
$g(t_0)$ is flat for each $t_0>0$, and the Ricci flow is independent of time. 
In particular, the initial data $(M^3,g_0)$ is flat and 
Theorem \ref{main_thm} follows.
\end{proof}
Thus the significance of our contribution is to extend the applicability of earlier work.

\vskip 0.2cm

\noindent
\emph{Acknowledgements:} PT was supported by EPSRC grant EP/T019824/1 and
thanks Henry Popkin for conversations about the classical work on this topic.
For the purpose of open access, the authors have applied a Creative Commons Attribution (CC BY) licence to any author accepted manuscript version arising.


\section{Strategy of the proof}

The key step to proving the global existence of Theorem \ref{SecIntro-RFexist}
will be to obtain local existence, with estimates, for a \emph{uniform} time:

\begin{thm}[{Special case of Theorem \ref{Thm:Construct-partialRF}}]
\label{Thm:Construct-partialRF_special}
For all $\e\in (0,\frac1{12})$, there exist $T(\e),a_0(\e)>0$ such that the following holds.
Suppose $(M^3,g_0)$ is a complete non-compact three-dimensional Riemannian manifold so that
$$\Ric(g_0)\geq \e \cR(g_0)\geq 0.$$
Then for any $p\in M$, there exists a smooth Ricci flow solution $g(t)$ defined on $B_{g_0}(p,1)\times [0,T]$ such that $g(0)=g_0$, $|\Rm(x,t)|\leq a_0t^{-1}$ and 
\begin{equation}
\Ric(x,t)\geq \e\cR(x,t)-1
\end{equation}
for all $(x,t)\in B_{g_0}(p,1)\times (0,T]$.
\end{thm}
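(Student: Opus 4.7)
The plan is to prove the theorem by an approximation procedure. First, I would construct a family $g_0^i$ of complete metrics on $M$ with uniformly bounded curvature, converging to $g_0$ in $C^\infty_{\mathrm{loc}}$ near $p$, and satisfying an approximate pinching $\Ric(g_0^i) \geq \e \cR(g_0^i) - \delta_i$ with $\delta_i \to 0$. A natural candidate is to leave $g_0$ unchanged on $B_{g_0}(p, R_i)$ and glue in a bounded-geometry cap outside via a conformal interpolation, arranged so that any pinching defect is concentrated in an annulus far from $p$ and decays as $R_i \to \infty$.

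For each $g_0^i$, Shi's theorem produces a smooth complete Ricci flow $g^i(t)$ with bounded curvature on some a priori short time interval. The key analytic step is to upgrade this to a uniform estimate $|\Rm(g^i(t))| \leq a_0 t^{-1}$ on $B_{g_0}(p, 1) \times (0, T]$ for a time $T = T(\e) > 0$ that is independent of $i$. I would obtain this from Perelman's pseudolocality: its noncollapsing hypothesis follows from $\Ric(g_0) \geq 0$ and Bishop–Gromov volume comparison, provided the approximation preserves volume near $p$, which is guaranteed by the local $C^\infty$ convergence. Combined with a distance-distortion estimate, pseudolocality propagates the scale-invariant curvature bound across the whole of $B_{g_0}(p, 1)$ for the uniform time $T$.

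The third ingredient is preservation of the pinching $\Ric \geq \e \cR g$ along each flow. In three dimensions, Hamilton's pointwise computation shows this convex cone is invariant under the curvature-operator ODE, and on compact manifolds the full PDE preservation follows from the tensor maximum principle. For the non-compact setting I would localize the argument using a cutoff supported on a large ball $B_{g_0}(p, R)$, applying the maximum principle to the minimum eigenvalue of a modified defect tensor $\Ric - \e \cR g + F$ engineered so that $F$ dominates the relevant boundary terms; the cutoff then contributes only a uniformly bounded additive error on $B_{g_0}(p, 1)$, which after suitable normalization yields the $-1$ on the right-hand side of the statement. Passing to the limit $i \to \infty$ using the uniform bound from the previous step extracts a smooth subsequential limit Ricci flow on $B_{g_0}(p, 1) \times [0, T]$ inheriting both estimates.

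The main obstacle will be the interplay in the second step. One must produce approximations $g_0^i$ that simultaneously have bounded curvature (to invoke Shi), enjoy enough volume noncollapsing near $p$ (to invoke pseudolocality with constants independent of $i$), and approximately respect the pinching (so the limit in the third step is meaningful) — all on a general 3-manifold where the scalar curvature of $g_0$ can be arbitrarily large. The pinching is the decisive structural ingredient: without it no Ricci flow continuation is known to exist at all, as highlighted by the remark following Theorem \ref{SecIntro-RFexist}, so the uniform time $T$ must be extracted from $\e$ in an essential way, presumably through the combination of Bishop–Gromov and pseudolocality described above.
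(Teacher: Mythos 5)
Your overall framework (approximate by bounded-curvature metrics, flow via Shi, obtain a uniform scale-invariant curvature estimate, pass to a limit) is a plausible scaffolding, and your third step — localizing the Hamilton cone-preservation argument with a cutoff to obtain $\Ric\geq\e\cR-1$ on the inner ball — is essentially what the paper does via the Lee--Tam local maximum principle (Lemma \ref{SecEst-almost-pinching}). But the crucial second step, the uniform bound $|\Rm|\leq a_0 t^{-1}$ on $B_{g_0}(p,1)\times(0,T]$ with $T$ depending only on $\e$, does not come from pseudolocality, and your justification for it has a gap that cannot be repaired.

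Two problems. First, $\Ric(g_0)\geq 0$ plus Bishop--Gromov does \emph{not} give volume noncollapsing: Bishop--Gromov gives monotonicity and hence an \emph{upper} bound on $\mathrm{vol}(B(p,r))/r^3$, not a lower bound, and the flat cylinder $S^1(r)\times\mathbb{R}^2$ with $r$ tiny shows that $\Ric\geq 0$ (indeed $\Ric\geq\e\cR\geq 0$) is consistent with arbitrarily severe local volume collapse. So the almost-Euclidean isoperimetric hypothesis of Perelman's pseudolocality theorem is simply not verified from the stated data. Second, even where it applies, pseudolocality gives control only for a time comparable to $r_0^2$, where $r_0$ is the scale at which the initial data is close to Euclidean; since $g_0$ has no a priori curvature bound, $r_0$ can be arbitrarily small at points of $B_{g_0}(p,1)$, and there is no way to extract a uniform $T(\e)$ this way. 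You correctly sense at the end that the pinching must enter in an essential way to produce the uniform time, but you do not say how, and the route you gesture at (Bishop--Gromov plus pseudolocality) does not supply it.

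The paper's actual mechanism is of a different nature. It establishes three a priori estimates — pinching is almost preserved under $a/t$ decay (Lemma \ref{SecEst-almost-pinching}); almost-pinched flows become almost-Einstein in a scale-invariant sense (Lemma \ref{SecEst-pinching-Implies-spherical}); almost-Einstein flows on a non-compact manifold force an $a_1(\e)/t$ curvature bound via a Simon--Topping-style point-picking contradiction, where blow-ups must be constant positive curvature and so compact (Lemma \ref{SecEst-pinching-curvatureEstimate}). Crucially $a_1$ depends only on $\e$, not on the current existence time. The theorem is then proved by an iterative restarting scheme borrowed from Hochard and Simon--Topping: use Hochard's conformal completion plus Shi on the time-$t_k$ slice (whose curvature is now controlled by $a_1/t_k$), extending the flow to a time $t_{k+1}=(1+\a a_1^{-1})t_k$, at the cost of shrinking the ball by $O(\sqrt{a_0 t_k})$. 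The radii lost form a geometric series, so the iteration can proceed until $t_k$ reaches a uniform $T(\e)$ while still covering $B_{g_0}(p,1)$. It is this bootstrap between pinching, roundness, and improved curvature decay — not pseudolocality — that converts $\e$ into a uniform existence time.
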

In order to prove global  existence (in space and time)
we will exploit the scale invariance of the hypotheses and apply the local existence theorem not to $g_0$ but to a sequence of blown down metrics $R_i^{-2}g_0$ with $R_i\to\infty$. 
The resulting Ricci flows can be parabolically scaled back to give
Ricci flows $g_i(t)$ on $B_{g_0}(p,R_i)\times [0,T R_i^2]$, 
still with the same $a_0t^{-1}$ curvature decay, and now with $g_i(0)=g_0$ where defined. By parabolic regularity, i.e. by Shi's estimates, it will be possible to take a subsequence and extract  a smooth complete limit Ricci flow. Moreover, an effect of the rescaling will be that this limit flow will be Ricci pinched, without error, and thus give the flow required by 
Theorem \ref{SecIntro-RFexist}. 

The proof, therefore, will rest on local existence as in  Theorem \ref{Thm:Construct-partialRF_special}.
Obtaining local existence over a time interval that can depend directly on $g_0$ is straightforward. One can modify the metric conformally in a boundary layer around the local region in order to make it complete and of bounded curvature, and then flow using Shi's classical existence theorem from \cite{Shi1989}, cf. 
\cite{JEMS, Hochard2016, SimonTopping2021}. In order to construct a flow whose existence time has no dependence on $g_0$ other than a dependence on the pinching constant $\e$, we develop an approach from \cite{Hochard2016} and \cite{SimonTopping2021} that essentially allows us to keep restarting the flow an uncontrollably large number of times until we have the flow over a uniform time interval. 
In order to make this work we need a collection of \emph{a priori} estimates that we compile in Section \ref{apriorisect}. These quantify how Ricci pinching, curvature decay and roundness are related to each other under Ricci flow. For example, Lemma \ref{SecEst-pinching-curvatureEstimate} tells us that a local Ricci flow that becomes round where the curvature blows up must satisfy $a_1/t$ curvature decay.

\section{A priori estimates for Ricci flow}
\label{apriorisect}
In this section, we will derive local estimates for the Ricci flow on three-manifolds satisfying a pinching assumption. First, we have the following local persistence of Ricci pinching.

\begin{lma}\label{SecEst-almost-pinching}
Suppose $(M^3,g(t)),t\in [0,T]$ is a smooth solution to the Ricci flow such that for some $x_0\in M$, we have $B_0(x_0,1)\Subset M$ for $t\in [0,T]$. 
If there exist $a>0$ and $\frac1{12}>\e>0$ such that
\begin{enumerate}
\item[(i)] $\Ric(g(0))\geq \e \cR(g(0))\geq 0$ on $B_{0}(x_0,1)$;
\item[(ii)] $|\Rm(g(t))|\leq at^{-1}$ on $B_0(x_0,1)$, $t\in (0,T]$;
\end{enumerate}
then there exists $S_1(a)>0$ such that for all $t\in [0,T\wedge S_1]$ 
we have 
$$\Ric(x_0,t)\geq \e \cR(x_0,t)-1.$$
In particular, $\cR(x_0,t)>-4$ for 
$t\in [0,T\wedge S_1]$.
\end{lma}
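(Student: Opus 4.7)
The plan is to establish the result via a localized tensor maximum principle applied to the pinching tensor
$$\Psi(x,t) := \Ric(g(t)) - \e\,\cR(g(t))\,g(t),$$
which by hypothesis (i) is nonnegative on $B_0(x_0,1)$ at $t=0$. The conclusion $\Ric(x_0,t)\geq \e\cR(x_0,t)-1$ is equivalent to $\lambda_{\min}(\Psi)(x_0,t)\geq -1$ in the $g(t)$-sense, and the ``in particular'' claim follows automatically by taking the trace: $\cR\geq 3(\e\cR-1)=3\e\cR-3$, so $\cR\geq -3/(1-3\e)>-4$ for $\e\in(0,1/12)$.

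Under the Ricci flow, $\Psi$ satisfies $\rheat \Psi = Q(\Rm,g,\e)$ for a quadratic reaction term $Q$. Hamilton's foundational algebraic identity in dimension three is that for $\e\in(0,1/3)$, $Q$ preserves the cone $\{\Psi\geq 0\}$: at any null eigenvector $v$ of $\Psi$, $Q(v,v)\geq 0$. Quantitatively, at points where $\lambda_{\min}(\Psi)<0$, the reaction along the corresponding eigendirection is bounded below by $-C(\e)\,|\Rm|_{g(t)}\cdot(-\lambda_{\min}(\Psi))_+$. To localize, I would fix a time-independent cutoff $\chi:M\to[0,1]$ with $\chi\equiv 1$ near $x_0$ and supported in $B_0(x_0,3/4)$, with $|\nabla^{g_0}\chi|$ and $|\nabla^{2,g_0}\chi|$ bounded by absolute constants. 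Using hypothesis (ii) and Hamilton's distance-distortion estimates, $g(t)$ and $g_0$ stay comparable on $B_0(x_0,1)$ for short time with constants depending only on $a$, so $|\nabla^{g(t)}\chi|$ and $|\Delta_{g(t)}\chi|$ remain bounded by $C(a)$; Shi's estimates additionally furnish $|\nabla\Rm|_{g(t)}\leq C(a)/\sqrt{t}$ in the interior.

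The heart of the argument is a scalar maximum principle (via support functions at a minimum) for
$$H(x,t) := \chi(x)\,\lambda_{\min}(\Psi)(x,t) + h(t),$$
where $h:[0,T]\to[0,\infty)$ is an increasing function with $h(0)=0$ to be determined. One derives a parabolic inequality of the schematic form
$$\partial_t H \geq \Delta_{g(t)} H + h'(t) - \frac{C(a,\e)}{t}\,(-H)_+ - C(a,\e),$$
where the $1/t$ singularity comes from $|\Rm|\leq a/t$ acting in the reaction, while the bounded error is from the gradient and Laplacian of the cutoff together with the cross terms $\nabla\chi\cdot\nabla\Psi$ (controlled via Shi's estimates). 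Crucially, the singular coefficient multiplies only $(-H)_+$, which vanishes at $t=0$ by hypothesis (i); a Gr\"onwall-type argument, taking for example $h(t)=C(a,\e)\,t^\alpha$ for a small exponent $\alpha>0$, closes the estimate and yields $h(S_1)\leq 1$ for $S_1=S_1(a,\e)$ sufficiently small. Evaluating at $x_0$, where $\chi=1$, gives $\lambda_{\min}(\Psi)(x_0,t)\geq -h(t)\geq -1$ on $[0,T\wedge S_1]$.

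The main obstacle I anticipate is the non-integrability of the curvature bound $a/t$ at $t=0$, which rules out any naive Gr\"onwall estimate on $|\Psi|$ directly; overcoming it requires precisely the observation that Hamilton's cone-preservation confines the singular reaction to the current negativity, alongside careful handling of the cutoff terms and the cross terms $\nabla\chi\cdot\nabla\Psi$ through Shi's interior estimates.
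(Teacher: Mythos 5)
Your choice of pinching tensor $\Psi = \Ric - \e\cR g$ and your use of Hamilton's cone-preservation to bound the reaction along a null eigendirection by $-C(\e)|\Rm|\,(-\lambda_{\min}(\Psi))_+$ agree precisely with the paper's set-up: the paper works with $\lambda = (-\lambda_{\min}(\Psi))_+$ and derives $\rheat\lambda \leq C_0|\Rm|\lambda$ in the barrier sense via the Chen--Zhu computation (this is where the constraint $\e<\tfrac1{12}$ is used, to arrange $A\geq B\geq0$). Your ``in particular'' trace argument is also correct.

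The gap is in the localization step, and it is a substantive one. First, the cited Shi estimate is off by a power: under $|\Rm|\leq a/t$, Shi gives $|\nabla\Rm|\leq C(a)\,t^{-3/2}$, not $C(a)/\sqrt{t}$. More importantly, the claim that the cutoff contributions to your parabolic inequality are a \emph{bounded} error is incorrect. In the quantity $H=\chi\,\lambda_{\min}(\Psi)+h(t)$, the cutoff produces a term $\lambda_{\min}(\Psi)\,\Delta\chi$, and the cross term $2\nabla\chi\cdot\nabla\lambda_{\min}(\Psi)$ at an interior minimum of $H$ reduces (via the first-order condition) to $2\lambda_{\min}(\Psi)\,|\nabla\chi|^2/\chi$. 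Both are proportional to $\lambda_{\min}(\Psi)$ itself, for which the only available a priori bound away from $x_0$ is $|\lambda_{\min}(\Psi)|\lesssim a/t$. Crucially, at a spatial minimum of $H$ one has control of $\chi\lambda_{\min}(\Psi)$, not of $\lambda_{\min}(\Psi)$, so near the boundary of $\operatorname{supp}\chi$ (where $\chi$ is small) these error terms can be of size $a/t$ regardless of how small $|H|$ is. This non-integrable $a/t$ error cannot be absorbed by any bounded barrier $h(t)=Ct^\alpha$, so the Gr\"onwall step does not close. This is precisely the difficulty that makes scale-invariant local maximum principles under $a/t$ curvature decay non-trivial; the paper sidesteps it by invoking the Lee--Tam local maximum principle \cite[Theorem 1.1]{LeeTam2020}, which is designed exactly for quantities satisfying $\rheat\varphi\leq C|\Rm|\varphi$ with vanishing initial data and $\varphi\lesssim a/t$, and is the genuinely hard ingredient your sketch attempts, unsuccessfully, to replicate with an elementary cutoff.
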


It is convenient to obtain the lower scalar curvature bound here by tracing the pinching estimate, but we remark that the property of having a local lower bound on the scalar curvature is
always preserved in the presence of $a/t$ curvature decay owing to an estimate of B.-L. Chen; see  \cite{Chen2009} and \cite[Section 8]{SimonTopping2016}.

\begin{proof}
For $(x,t)\in M\times [0,T]$, define
\begin{equation}
\lambda(x,t)=\inf\{s\geq 0:\Ric(x,t)-\e \cR(x,t)g(x,t)+sg(x,t) >0 \}.
\end{equation}

Clearly, we have
\begin{enumerate}
\item[(a)]$\lambda(x,0)=0$ for $x\in B_0(x_0,1)$;
\item[(b)]$\lambda(x,t)\leq Cat^{-1}$ for $t\in (0,T]$ and $x\in B_0(x_0,1)$, with $C$ universal.
\end{enumerate}

It suffices to estimate $\lambda(x_0,t)$ from above for $t$ small. 
By \cite[(74)-(75)]{ChenZhu2000}, $\lambda(x,t)$ satisfies
\begin{equation}
\rheat \lambda\leq h\lambda -A(\lambda_2^2+\lambda_3^2)+2B\lambda_2\lambda_3
\end{equation}
in the barrier sense, where $\lambda_i$ are eigenvalues of $\Ric(x,t)$ with respect to $g(t)$ with $\lambda_1\leq \lambda_2\leq \lambda_3$, $h$ is some linear combination of $\lambda_i$, $A=1-2\e+\frac{\e}{1-\e}-2\e \left(\frac{\e}{1-\e}\right)^2$ and 
$B=1-\frac{\e}{1-\e}+2\e\left(\frac{\e}{1-\e}\right)^2$. 
Because $0<\e<\frac1{12}$, we have
$A\geq B\geq 0$ and hence $\lambda$ satisfies
\begin{equation}
\rheat \lambda\leq C_0|\Rm| \lambda
\end{equation}
for some constant $C_0>0$. 
We may now apply \cite[Theorem 1.1]{LeeTam2020} to conclude. 
This completes the proof.
\end{proof}

The next lemma shows that under almost Ricci pinching, the Ricci flow will be almost Einstein.

\begin{lma}\label{SecEst-pinching-Implies-spherical}
Suppose $(M^3,g(t)),t\in [0,T]$ is a smooth solution to the Ricci flow 
and that for some $x_0\in M$, $a>0$ and $\frac1{100}>\e>0$ we have
\begin{enumerate}
\item[(i)]  $B_t(x_0,1)\Subset M$ for all $t\in [0,T]$;
\item[(ii)] $\Ric(g(t))\geq \e \cR(g(t))-1$ for all $x\in B_t(x_0,1)$ and $t\in [0,T]$;
\item[(iii)] $|\Rm(x,t)|\leq at^{-1}$ for all $x\in B_t(x_0,1)$ and $t\in (0,T]$.
\end{enumerate}
Then there exist $S_2=S_2(a,\e)$, $L_1(\e)>0$ and $\sigma(\e)\in (1,2)$ so that 
at $x_0$, for all $t\in (0,T\wedge S_2]$, we have 
$$\left|\Ric-\frac13 \cR g\right|^2\leq \frac{L_1}{t^{2-\sigma}} \cdot (\cR+4)^\sigma.$$
\end{lma}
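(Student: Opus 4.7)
The plan is to run a localized maximum-principle argument on the quotient
\begin{equation*}
Q(x,t) := \frac{|\mathring{\Ric}(x,t)|^2}{(\cR(x,t)+4)^\sigma}, \qquad \mathring{\Ric} := \Ric - \tfrac{1}{3}\cR\, g,
\end{equation*}
for a suitably chosen $\sigma = \sigma(\e)\in(1,2)$, and to show that $t^{2-\sigma} Q(x_0,t) \leq L_1(\e)$. Note that the curvature bound $|\Rm|\leq a/t$ alone immediately gives the weak estimate $Q \leq C(a)/t^{2-\sigma}$; the nontrivial task is to obtain a constant $L_1$ depending only on $\e$, for which the pinching must be used essentially. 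The shape of the statement, with $(\cR+4)^\sigma$ rather than $\cR^\sigma$ and the additive slack $-4$ in the scalar-curvature floor, is precisely what is required to tolerate the $-1$ in the almost-pinching hypothesis.

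First I would derive the parabolic evolution. In three dimensions the Weyl tensor vanishes, so Hamilton's Bochner-type computation for $E := |\mathring{\Ric}|^2$ reduces to
\begin{equation*}
(\partial_t - \Delta) E \leq -2|\nabla \mathring{\Ric}|^2 + P(\Ric), \qquad (\partial_t - \Delta)(\cR+4) = 2|\Ric|^2,
\end{equation*}
with $P$ a cubic form in the eigenvalues of $\Ric$. Invoking the eigenvalue analysis of Chen-Zhu \cite[(74)-(75)]{ChenZhu2000} already exploited in Lemma \ref{SecEst-almost-pinching}, the almost-pinching hypothesis $\Ric \geq \e\cR - 1$ allows one to bound $P(\Ric) \leq C(\e)(\cR+1)\, E$. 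Applying the quotient rule for $Q = E (\cR+4)^{-\sigma}$ and Kato's inequality $|\nabla E|^2 \leq 4E|\nabla \mathring{\Ric}|^2$, and choosing $\sigma$ close enough to $1$ so that the gradient cross term $-2\sigma \nabla Q \cdot \nabla(\cR+4)/(\cR+4)$ can be completed into a nonpositive contribution against the Kato gradient term, one obtains an inequality of the schematic form
\begin{equation*}
(\partial_t - \Delta) Q \leq C(\e)\frac{Q}{\cR+4} + C(\e)\,(\cR+4)^{-\sigma}
\end{equation*}
on $B_t(x_0,1)$, with all constants depending only on $\e$.

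Next I would apply a localized maximum principle to $F := \eta\, t^{2-\sigma}\, Q$, where $\eta(x,t)$ is a spatial cutoff supported in $B_t(x_0,1)$, equal to $1$ near $x_0$, and with $(\partial_t - \Delta)\eta$ and $|\nabla\eta|^2/\eta$ bounded independently of $a$ via the standard Perelman-type construction using $|\Rm|\leq a/t$. At an interior space-time maximum of $F$ one has $(\partial_t - \Delta)F \geq 0$; combining this with the inequality for $Q$ and with $|\Rm|\leq a/t$ (which controls $(\cR+4)^{-\sigma} \leq t^{\sigma}$ up to constants once we factor out the time weight) yields $F \leq L_1(\e)$ on $B_t(x_0,1)\times(0,T\wedge S_2]$, hence the conclusion at $x_0$. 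The main obstacle is the second step: every constant in the evolution of $Q$ must depend only on $\e$, never on $a$. The trilinear Ricci term $P(\Ric)$ must be dominated through the pinching alone (not via $|\Ric|\leq Ca/t$), and the choice of $\sigma \in (1,2)$ is dictated by a delicate trade-off, it must exceed $1$ to provide a genuine gain from the denominator of $Q$, yet stay close enough to $1$ that the gradient cross term can be absorbed against the Kato term.
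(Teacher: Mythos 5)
Your overall architecture — a weighted traceless-Ricci quotient, a $t^{2-\sigma}$ time-weight, and a Perelman-style localized maximum principle with $a/t$ decay — matches the paper's. However, two of your key technical claims go in the wrong direction, and as a result the maximum-principle argument cannot close.

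First, the choice of $\sigma$. You propose taking $\sigma$ close to $1$ so that the gradient cross term can be absorbed. In fact, after completing the square in the gradient terms (exactly as in Hamilton's Lemma 10.3 and p.~284 of \cite{Hamilton1982}), one is left with a coefficient $(\sigma-2)(\sigma-1)|\Ric|^2|\nabla\cR|^2$, which is nonpositive for \emph{any} $\sigma\in(1,2)$; the gradient structure imposes no constraint near either endpoint. The real constraint on $\sigma$ comes from the pinching-improvement lemma \cite[Lemma 10.7]{Hamilton1982}, which requires the exponent defect to be small: $2-\sigma\leq \e^2$. Hence $\sigma$ must be taken close to $2$, not close to $1$. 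This is also what makes the forcing term $(a/t)^{2(2-\sigma)/(3-\sigma)}$ absorbable after multiplying by $t^2$: one needs $\frac{2-\sigma}{3-\sigma}<\frac14$, which again forces $\sigma$ close to $2$.

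Second, and more fundamentally, your claimed reaction bound $P(\Ric)\leq C(\e)(\cR+1)\,E$ has the wrong sign and strength. This yields only a \emph{linear} inequality $\rheat Q\leq C(\e)Q+\ldots$, which under the maximum principle propagates the initial size of $Q$ and gives a bound depending on $g_0$ (indeed, multiplying by $t^{2-\sigma}$ and Gronwalling yields $t^{2-\sigma}Q\leq t^{2-\sigma}e^{Ct}\left(Q(\cdot,0)+Ct\right)$, with no control on $Q(\cdot,0)$ — precisely what we must avoid). What is needed is a \emph{strictly negative superlinear} term coming from the pinching. The paper obtains this by first shifting $\widetilde\Ric:=\Ric+2g$, $\tilde\cR:=\cR+6$, so that the almost-pinching hypothesis $\Ric\geq\e\cR-1$ becomes genuine pinching $\widetilde\Ric\geq\e\tilde\cR>0$, and then applying Hamilton's Lemma 10.7 to $\widetilde\Ric$ to produce a term $-\frac13\e^2(\cR+4)^{2+\sigma}f$ (modulo lower-order errors), which after the Brendle--Huisken--Sinestrari observation yields $-c_1 f^{\frac{3-\sigma}{2-\sigma}}$. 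Passing to $F=f^{\frac{1}{2-\sigma}}$ and $G=tF$ turns this into a quadratic damping $-c_3 G^2$, which is exactly what allows the localized maximum principle to close with a bound depending only on $\e$. Your sketch has no analogue of this negative term, and your remark that the additive $4$'s "tolerate the $-1$" does not substitute for the shift $\widetilde\Ric=\Ric+2g$ needed to make Hamilton's lemma applicable. Finally, your claim that $|\Rm|\leq a/t$ "controls $(\cR+4)^{-\sigma}\leq t^{\sigma}$" runs the wrong way: the curvature decay gives an upper bound on $\cR$, hence a \emph{lower} bound on $(\cR+4)^{-\sigma}$; what one actually uses is $\cR+4>\frac12$ from tracing the pinching hypothesis, giving $(\cR+4)^{-\sigma}\leq 2^{\sigma}$, a constant.
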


\begin{proof}
By increasing $a$ if necessary, we may assume that $a>1$.
In the proof, we will use $c_i$ and $C_i$ to denote constants depending only on $\e,\sigma$. 
We will follow closely the computations of Hamilton \cite{Hamilton1982} and
Lott \cite{Lott2019}. We may assume $S_2\leq 1$ from the outset, and will constrain 
$S_2$ further during the proof.

By taking the trace of (ii), we have
\begin{equation}\label{LemmaStep: almostpreserveRRic}
\cR(g(t))+4>\displaystyle \frac12
\end{equation}
on $B_t(x_0,1)$. In what follows, we will always work on $B_t(x_0,1)$ where \eqref{LemmaStep: almostpreserveRRic} holds. Consider the function
$$f(x,t)=\frac{1}{(\cR+4)^{\sigma}}\left(|\Ric|^2-\frac13 (\cR+4)^2 \right)$$
on $B_t(x_0,1)$ where $1<\sigma<2$ is a constant to be chosen later.  This is well-defined on  on $B_t(x_0,1)$ thanks to  \eqref{LemmaStep: almostpreserveRRic}. This is a 
modified version of the quantity considered in \cite[Theorem 10.1]{Hamilton1982}. We will later show that a suitable bound on $f(x,t)$ is equivalent to the conclusion of the lemma.


We first compute the evolution of $|\Ric|^2 (\cR+4)^{-\sigma}$.
By \cite[Lemma 10.2]{Hamilton1982}, if $\lambda\geq \mu\geq \nu$ denote the eigenvalues of $\Ric$ with respect to $g(t)$, then
\begin{equation}
\rheat  |\Ric|^2= -2|\nabla_i \R_{jk}|^2+4(T-L)
\end{equation}
where
\begin{equation}
\left\{
\begin{aligned}
T&=\lambda^3+\mu^3+\nu^3; \\
L&=\lambda^3+\mu^3+\nu^3-\left( \lambda^2\mu+\lambda\mu^2+\lambda\nu^2+\lambda^2\nu+\mu^2\nu+\mu\nu^2\right)+3\lambda\mu\nu.
\end{aligned}
\right.
\end{equation}
%
%
We will use this to compute the evolution of $f$ by following \cite[Lemma 10.3]{Hamilton1982} as closely as possible to give
\begin{equation}
\begin{split}
&\quad \rheat \frac{|\Ric|^2}{(\cR+4)^{\sigma}}\\
&=\frac{1}{(\cR+4)^{\sigma}}\rheat |\Ric|^2 +|\Ric|^2\rheat (\cR+4)^{-\sigma}\\
&\quad -2\langle \nabla |\Ric|^2, \nabla (\cR+4)^{-\sigma}\rangle\\
&=(\cR+4)^{-\sigma}\left[-2|\nabla \Ric|^2+4(T-L) \right]-2\sigma|\Ric|^4 (\cR+4)^{-\sigma-1}\\
&\quad -\sigma(\sigma+1)(\cR+4)^{-\sigma-2}|\nabla \cR |^2|\Ric|^2 +2 \sigma (\cR+4)^{-\sigma-1}\langle\nabla |\Ric|^2 , \nabla \cR\rangle\\
&=(\cR+4)^{-\sigma-2}\Big\{-2 (\cR+4)^2 |\nabla \Ric|^2+2 \sigma (\cR+4)\langle \nabla |\Ric|^2, \nabla \cR\rangle\\
&\qquad\qquad\qquad\qquad -\sigma(\sigma+1)  |\nabla \cR|^2 |\Ric|^2 \Big\}\\
&\quad+ (\cR+4)^{-\sigma-1}\left[4(T-L)(\cR+4)-2\sigma |\Ric|^4 \right].
\end{split}
\end{equation}
%
We now handle the first bracket more carefully. Following the computation in \cite[page 284]{Hamilton1982}, we see that 
\begin{equation}
\begin{split}
&-2(\cR+4)^2|\nabla \Ric|^2+2\sigma (\cR+4)\langle  \nabla \cR,\nabla |\Ric|^2\rangle-\sigma (\sigma+1) |\Ric|^2 |\nabla \cR|^2\\
&=-2|(\cR+4)\nabla_i \R_{jk}- \cR_i\R_{jk}|^2+2(\sigma-1) (\cR+4) \langle  \nabla \cR,\nabla |\Ric|^2\rangle\\
&\quad +[2-\sigma (\sigma+1)] |\Ric|^2 |\nabla \cR|^2\\
&=-2|(\cR+4)\cdot \nabla_i \R_{jk}- \cR_i \R_{jk}|^2+[2-\sigma (\sigma+1)] |\Ric|^2 |\nabla \cR|^2\\
&\quad +2(\sigma-1) (\cR+4)^{\sigma+1}\left( \left\langle \nabla \cR, \nabla \frac{|\Ric|^2}{(\cR+4)^{\sigma}}\right\rangle +\frac{\sigma |\nabla \cR|^2 |\Ric|^2 }{(\cR+4)^{\sigma+1}}\right)\\
&=-2|(\cR+4)\cdot \nabla_i \R_{jk}- \cR_i  \R_{jk}|^2+(\sigma-2)(\sigma-1)|\Ric|^2 |\nabla \cR|^2\\
&\quad +2(\sigma-1) (\cR+4)^{\sigma+1}\left \langle \nabla \cR, \nabla \frac{|\Ric|^2}{(\cR+4)^{\sigma}} \right\rangle.
\end{split}
\end{equation}
Hence,
\begin{equation}
\begin{split}
& \rheat  \frac{|\Ric|^2}{(\cR+4)^{\sigma}}\\
&= -\frac{2}{(\cR+4)^{\sigma+2}}|(\cR+4)\cdot \nabla_i \R_{jk}- \cR_i  \R_{jk}|^2\\
&\quad +\frac{2(\sigma-1)}{\cR+4}\left\langle\nabla \cR, \nabla \frac{|\Ric|^2}{(\cR+4)^{\sigma}} \right\rangle\\
&\quad -\frac{(2-\sigma)(\sigma-1)}{(\cR+4)^{\sigma+2}}|\Ric|^2 |\nabla \cR|^2 +\frac{4(T-L)(\cR+4)-2\sigma |\Ric|^4 }{(\cR+4)^{\sigma+1}} .
\end{split}
\end{equation}
On the other hand,
\begin{equation}
\begin{split}
&\rheat  (\cR+4)^{2-\sigma}\\
&=2(2-\sigma) (\cR+4)^{1-\sigma} |\Ric|^2 -(2-\sigma)(1-\sigma) (\cR+4)^{-\sigma}|\nabla \cR|^2\\
&=\frac{2(2-\sigma)}{(\cR+4)^{1+\sigma}} (\cR+4)^{2} |\Ric|^2 +\frac{2(\sigma-1)}{\cR+4} \left\langle\nabla
\cR, \nabla (\cR+4)^{2-\sigma} \right\rangle\\
&\quad -\frac{(2-\sigma)(\sigma-1)}{(\cR+4)^{2}}(\cR+4)^{2-\sigma} |\nabla \cR|^2.
\end{split}
\end{equation}
%
Therefore, the function $f$ satisfies
\begin{equation}
\begin{split}
&\quad \rheat f\\
&=\rheat \left[\frac{|\Ric|^2}{(\cR+4)^\sigma}-\frac13 (\cR+4)^{2-\sigma} \right] \\
&=\frac{2(\sigma-1)}{\cR+4} \langle\nabla \cR, \nabla f\rangle-\frac{(2-\sigma)(\sigma-1)}{(\cR+4)^{2}}|\nabla \cR|^2f\\
&\quad -\frac{2}{(\cR+4)^{\sigma+2}}|(\cR+4) \nabla_i \R_{jk}- \cR_i  \R_{jk}|^2\\
&\quad +\frac{2}{(\cR+4)^{\sigma+1}}\Big\{  (2-\sigma) |\Ric|^2\left[ |\Ric|^2-\frac13 (\cR+4)^2\right]-2J\Big\}
\end{split}
\end{equation}
where $J(\Ric):=|\Ric|^4+(\cR+4)(L-T)$. More generally we will regard $J$ as a function of 
$(0,2)$ tensors, depending only on their eigenvalues, that has this expression when applied to $\Ric$.

We now examine the evolution of $f$ wherever $f>0$. In this case,
\begin{equation}\label{LemmaStep: almostpreserveRRic-1}
\begin{split}
 \rheat f&\leq \frac{\sigma-1}{2-\sigma} \frac{|\nabla f|^2}{f}\\
 &\quad +\frac{2}{(\cR+4)^{\sigma+1}}\Big\{  (2-\sigma) |\Ric|^2 (\cR+4)^\sigma f-2J\Big\}.
\end{split}
\end{equation}
%
To control the final term, 
consider the modified Ricci curvature tensor $\widetilde\Ric=\Ric+  2g$ and the corresponding modified scalar curvature tensor $\tilde \cR=\cR+6$. 
By the almost pinching assumption for $\Ric$ and the fact that $\e\in (0,\frac1{100})$, we have 
$$\tilde \cR>0 \quad \text{and}\quad \widetilde\Ric \geq  \e \tilde \cR$$
on $B_t(x_0,1)$ for $t\leq   T$.

Consider the fourth order operator of $\Ric$ given by 
$$P(\Ric):=|\Ric|^4+\cR (L-T)$$
so that $J(\Ric)=P(\Ric)+4(L-T)$. Here $P(\Ric)$ is the quantity considered 
by Hamilton in \cite[Lemma 10.7]{Hamilton1982}. We instead apply it to the modified Ricci tensor $\widetilde{\Ric}$ and $\tilde\cR$.  By putting $\Ric=\widetilde{\Ric}-2g$ and $\cR=\tilde\cR-6$, and treating terms that are less than quartic as error terms, we compute
\begin{equation}
\begin{split}
&\quad (2-\sigma) |\Ric|^2 \left[|\Ric|^2-\frac13 ( \cR+4)^2 \right]-2J(\Ric)\\
&= (2-\sigma) |\widetilde\Ric-2g|^2 \left[|\widetilde\Ric-2g|^2-\frac13  (\tilde \cR-2)^2 \right]-2J(\widetilde{\Ric}-2g)\\
&\leq (2-\sigma)|\widetilde{\Ric}|^2\left[|\widetilde\Ric|^2-\frac13  \tilde \cR^2 \right]-2P(\widetilde{\Ric})+C_0(|\widetilde\Ric|^3+1)\\
\end{split}
\end{equation}
for some constant $C_0>0$.  Here we have used the fact that $T(\Ric)$ and $L(\Ric)$ are third order operators on $\Ric$.  Since $\widetilde\Ric>0$, we have $|\widetilde\Ric|\leq \tilde\cR$ and hence  \eqref{LemmaStep: almostpreserveRRic}  implies that 
\begin{equation}\label{LemmaStep: almostpreserveRRic-2}
\begin{split}
&\quad (2-\sigma) |\Ric|^2 \left[|\Ric|^2-\frac13 ( \cR+4)^2 \right]-2J(\Ric)\\
& \leq  (2-\sigma)|\widetilde{\Ric}|^2\left[|\widetilde\Ric|^2-\frac13  \tilde \cR^2 \right]-2P(\widetilde{\Ric})+C_1(\cR+4)^3.
\end{split}
\end{equation}
Applying \cite[Lemma 10.7]{Hamilton1982} on $\widetilde\Ric$, if $2-\sigma<\e^2$, then
\begin{equation}
\begin{split}
&\quad  (2-\sigma) |\widetilde\Ric|^2 \left[|\widetilde\Ric|^2-\frac13  \tilde \cR^2 \right]-2P(\widetilde\Ric) \\
&\leq -\e^2|\widetilde\Ric|^2 \left[|\widetilde\Ric|^2-\frac13  \tilde \cR^2 \right]\\
&\leq -\frac13 \e^2 (\cR+4)^{2+\sigma} f+C_2(\cR+4)^3
\end{split}
\end{equation}
for some $C_2>0$, where we have used the identity
$|\Ric|^2\geq \frac13 \cR^2$, and 
where as usual we pay close attention to quartic terms and treat lower order terms as errors.
Now we apply the observation in \cite[Page 539]{BrendleHuiskenSinestrari2011} to see that $f^\frac{1}{2-\sigma}\leq c_0^{-1} (\cR+4)$ for some $c_0(\e)>0$ since 
\begin{equation}
\begin{split}
(\cR+4)^\sigma f=|\Ric|^2-\frac13 (\cR+4)^2 &\leq |\Ric|^2\\
&= |\widetilde\Ric-2g|^2\\
&\leq C_3(\cR+4)^{2-\sigma} \cdot (\cR+4)^{\sigma}.
\end{split}
\end{equation}
Here we have used \eqref{LemmaStep: almostpreserveRRic} and the fact that $\widetilde\Ric>0$. Therefore,  we conclude that 
\begin{equation}
\label{LemmaStep: almostpreserveRRic-3}
\begin{split}
&  \quad(2-\sigma) |\widetilde\Ric|^2 \left[|\widetilde\Ric|^2-\frac13  \tilde \cR^2 \right]-2P(\widetilde\Ric)\\ 
&\leq -\frac{c_1}{2} (\cR+4)^{1+\sigma}  f^\frac{3-\sigma}{2-\sigma}+C_2(\cR+4)^3
\end{split}
\end{equation}
for some $c_1(\e)>0$.

\bigskip

Combining \eqref{LemmaStep: almostpreserveRRic-1}, \eqref{LemmaStep: almostpreserveRRic-2} and \eqref{LemmaStep: almostpreserveRRic-3}, we conclude that if $2-\e^2<\sigma<2$, then
\begin{equation}
\begin{split}
\rheat f\leq \frac{\sigma-1}{2-\sigma} \frac{|\nabla f|^2}{f}-c_2 f^\frac{3-\sigma}{2-\sigma} +C_4(\cR+4)^{2-\sigma}
\end{split}
\end{equation}
whenever $f>0$. Equivalently, $F=f^\frac{1}{2-\sigma}$ satisfies
\begin{equation}
\begin{split}
\rheat F&\leq -c_3 F^2 +C_5(\cR+4)^{2-\sigma} F^{\sigma-1}\\
&\leq -\frac12 c_3 F^2  +C_6 (1+(at^{-1})^{\frac{2(2-\sigma)}{3-\sigma}} )
\end{split}
\end{equation}
whenever $F(x,t)>0$. Here we have used the $a/t$ curvature decay assumption.

If we choose $\sigma$ sufficiently close to $2$ so that
$$\frac{2-\sigma}{3-\sigma}<\frac14,$$
then if $S_2<a^{-1}$, (recall $a>1\geq S_2\geq t$ also), the function $G=tF$ will satisfy
\begin{equation}
\label{3lines}
\begin{split}
t\cdot \rheat G&\leq t^2\cdot \left(-\frac12 c_3 F^2  +C_6 (1+(at^{-1})^{1/2} ) \right) +tF\\
&\leq -\frac12 c_3 G^2 +C_6\left(1+(at^3)^{1/2}\right)+G\\
&\leq -\frac14 c_3 G^2 +C_7
\end{split}
\end{equation}
for $t\in (0,S_2\wedge T]$ whenever $G>0$.

\bigskip

Now we are ready to apply the local maximum principle. Let $\phi$ be a non-increasing smooth function on $[0,+\infty)$ such that $\phi(s)\equiv 1$ on $[0,\frac34]$, vanishing outside $[0,1]$ and satisfying
$$\phi''\geq -10^3\phi\quad\text{and}\quad  |\phi'|^2\leq 10^3\phi.$$

Let $\Phi(x,t)=e^{-10^3t}\phi(d_t(x,x_0)+\b \sqrt{at})$ be a cutoff function on $M$. The constant $\b$ is a universal constant so that
\begin{equation}
\rheat \Phi \leq 0
\end{equation}
in the barrier sense using  \cite[Lemma 8.3]{Perelman2002}. We may assume $\Phi$ to be smooth when we apply the maximum principle; for example see \cite[Section 7]{SimonTopping2016} for a detailed exposition. 

Now we apply the maximum principle on $M\times [0,T\wedge S_2]$ to estimate $G$ on $B_t(x_0,\frac12)$.   Owing to the cutoff $\Phi$, the product $\Phi \cdot G$ must attain a maximum at some $(x_1,t_1)\in M\times [0,T\wedge S_2]$.  If $t_1=0$, the conclusion holds trivially since $G(x_1,0)=0$. Thus, we may assume $t_1>0$.  At $(x_1,t_1)$ where $\Phi G$ achieves a maximum, we may assume $G(x_1,t_1)>0$ so that
\begin{equation}
\label{Gbound}
\begin{split}
0&\leq  t\Phi \rheat (\Phi G)\\
&= tG \Phi \rheat \Phi+ \Phi^2 t\rheat G-2t \Phi\langle \nabla \Phi,\nabla G\rangle \\
&\leq  \Phi^2 \left( -\frac14 c_3 G^2+C_7\right)+2 G |\nabla \Phi|^2\\
&\leq -\frac18c_3 (\Phi G)^2+C_8.
\end{split}
\end{equation}
Here we have used that $\nabla (\Phi G)=0$ at $(x_1,t_1)$ and $t\leq S_2\leq 1$. 
We also used that $\frac{|\nabla \Phi|^2}{\Phi}$ is bounded by construction.

In particular we find  that $\Phi G(x_1,t_1)\leq C_9$ and hence, 
\begin{equation}
\sup_{M\times [0,T\wedge S_2]} \Phi G\leq \Phi G|_{(x_1,t_1)}\leq C_9.
\end{equation}

Notice that $B_t(x_0,\frac12)\subset \{(x,t):\Phi=1\}$ if $t<(16\b^2 a)^{-1}$.  This shows that if we shrink $S_2$ further, then for all $t\in (0,T\wedge S_2]$,
\begin{equation}
f(x_0,t)=\left(t^{-1}G(x_0,t)\right)^{2-\sigma}\leq C_{10}t^{\sigma-2}.
\end{equation}

To recover our conclusion from $f$,  it suffices to point out that from $\widetilde\Ric>0$ and \eqref{LemmaStep: almostpreserveRRic}, 
we have at $x_0$ that
\begin{equation}
\begin{split}
\left|\Ric-\frac13 \cR g\right|^2&\leq \left|\Ric-\frac13( \cR+4) g\right|^2+C_{11} (\cR+4)\\
&=(\cR+4)^{\sigma }f  +C_{12} (\cR+4)\\
&\leq C_{10}t^{\sigma-2}(\cR+4)^{\sigma } +C_{12} (\cR+4).
\end{split}
\end{equation}
Since $\sigma\in (1,2)$, we can absorb the final term, after possibly reducing $S_2$ one final time, thus completing the proof.
\end{proof}

\bigskip

Next, we show that under the estimate from Lemma~\ref{SecEst-pinching-Implies-spherical}, the Ricci flow gains a curvature decay estimate.


\begin{lma}\label{SecEst-pinching-curvatureEstimate}
Suppose $M^3$ is a non-compact (connected) manifold and $g(t),t\in[0,T]$ is a smooth solution to the Ricci flow on $M$ so that for some $x_0\in M$, 
\begin{enumerate}
\item[(i)] $B_t(x_0,1)\Subset M$ for $t\in[0,T]$;
\item[(ii)] $ \cR(x,t)>-4$ for $x\in B_t(x_0,1)$, $t\in [0,T]$;
\item[(iii)] $\left|\Ric-\frac13 \cR g\right|^2\leq Lt^{\sigma-2}  (\cR+L)^\sigma$ on $B_t(x_0,1)$ for $t\in [0,T]$ and some $L>0,\sigma\in (1,2)$.
\end{enumerate}
Then there exist $a_1(\sigma,L),S_3(\sigma,L)>0$ such that for $t\in (0,T\wedge S_3]$ 
we have
$$|\Rm(x_0,t)|\leq a_1t^{-1}.$$
\end{lma}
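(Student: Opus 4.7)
The conclusion $|\Rm(x_0,t)|\le a_1/t$ reduces, via the three-dimensional identity $|\Rm|^2\le C|\Ric|^2=C(\tfrac13\cR^2+|E|^2)$ with $E:=\Ric-\tfrac13\cR g$ combined with (iii), to a uniform upper bound $\cR(x_0,t)\le A/t$ with $A=A(\sigma,L)$: granting this, (iii) yields $|E(x_0,t)|^2\le Lt^{\sigma-2}(A/t+L)^\sigma\le C(\sigma,L)t^{-2}$ for $t$ small, whence the desired bound on $|\Rm|$. Writing $h:=\cR+L$ (positive by (ii), taking $L\ge 5$) and using $(\partial_t-\Delta)\cR=2|\Ric|^2=\tfrac23\cR^2+2|E|^2$ together with (iii), one obtains $(\partial_t-\Delta)h\le\tfrac23 h^2+2Lt^{\sigma-2}h^\sigma$, so the scale-invariant quantity $\psi:=th$ satisfies
$$(\partial_t-\Delta)\psi\le\frac{1}{t}\Bigl(\psi+\tfrac23\psi^2+2L\psi^\sigma\Bigr).$$
The quadratic term $\tfrac23\psi^2/t$ has the \emph{wrong} sign for a naive maximum-principle bound.

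My plan is to argue by contradiction and blow-up. Suppose no $(a_1,S_3)$ depending only on $(\sigma,L)$ work. Then there exist flows $(M_i,g_i(t))$ satisfying (i)--(iii) with the given $\sigma,L$, and spacetime points $(y_i,s_i)$ with $s_i\downarrow 0$ and $Q_is_i\to\infty$, where $Q_i:=|\Rm_i(y_i,s_i)|$. By Hamilton-style parabolic point-picking, we arrange that $|\Rm_i|\le 4Q_i$ on a parabolic neighborhood of $(y_i,s_i)$ of size $\sim Q_i^{-1/2}$ in space and $\sim Q_i^{-1}$ in time. Parabolically rescale $\tilde g_i(\tau):=Q_ig_i(s_i+\tau/Q_i)$, so $|\widetilde\Rm_i|(y_i,0)=1$. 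The rescaled form of (iii) reads
$$|\widetilde E_i|^2\le LQ_i^{-2}(s_i+\tau/Q_i)^{\sigma-2}(Q_i\widetilde\cR_i+L)^\sigma,$$
which at any fixed $\tau$ (for $\widetilde\cR_i$ bounded) is $O((Q_is_i+\tau)^{\sigma-2})\to 0$ as $Q_is_i\to\infty$, using $\sigma<2$. By Shi's estimates, a non-collapsing input, and Hamilton's compactness theorem, one extracts a smooth pointed ancient limit $(\widetilde M_\infty,\tilde g_\infty(\tau),y_\infty)$ defined for $\tau\in(-\infty,0]$ with $|\widetilde\Rm_\infty|(y_\infty,0)=1$, $\widetilde\cR_\infty\ge 0$ (inherited from (ii) since $-4/Q_i\to 0$), and $\widetilde E_\infty\equiv 0$. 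Hence $\tilde g_\infty$ is Einstein throughout.

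In three dimensions an Einstein metric with $\widetilde\cR_\infty\ge 0$ that is non-flat (required by $|\widetilde\Rm_\infty|(y_\infty,0)=1$) is a positive-scalar spherical space form $S^3/\Gamma$, hence compact. Pointed Cheeger--Gromov--Hamilton convergence to a compact limit supplies, for large $i$, diffeomorphisms $\phi_i:\widetilde M_\infty\to V_i\subset M_i$ onto open subsets with $V_i$ compact (as the diffeomorphic image of compact $\widetilde M_\infty$). A nonempty compact open subset of the connected manifold $M_i$ must equal $M_i$, forcing $M_i$ to be compact --- contradicting the hypothesis that $M_i$ is non-compact.

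The principal obstacle is establishing the non-collapsing at scale $Q_i^{-1/2}$ required by Hamilton's compactness theorem, with a constant uniform across the sequence. This should follow from Perelman's no-local-collapsing applied to the local flow on $B_t(x_0,1)\Subset M$ (using the bounded geometry from smoothness of $g(t)$ on $[0,T]$), but care is needed to ensure the non-collapsing constant depends only on $(\sigma,L)$ and not on the individual flow $g_i$; a careful localization or an additional geometric input from (iii) may be required. The remaining steps --- parabolic point-picking, extraction of a smooth Cheeger--Gromov limit, and propagation of the Einstein condition from $\tau=0$ to the whole limit flow --- are standard adaptations of classical Ricci-flow techniques.
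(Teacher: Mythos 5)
Your overall strategy---contradiction, parabolic point-picking, rescaling to unit curvature, deducing that the limit is Einstein (hence a round space form) from the rescaled form of (iii), and contradicting non-compactness---is exactly the paper's strategy. However, the ``principal obstacle'' you flag at the end is a genuine gap, and it is not resolved the way you suggest. Perelman's no-local-collapsing is not available here: the Ricci flow is only defined locally on the balls $B_t(x_0,1)$, with no completeness, no global curvature bound, and no assumption whatsoever on the injectivity radius or volume of the initial metric; any non-collapsing constant obtained from the individual flow $g_i$ would depend on $g_i$ and not just on $(\sigma,L)$, which would destroy the contradiction (after extraction the limit could simply be flat $\mathbb{R}^3/\mathbb{Z}^2$ or similar, not forced to have $|\widetilde{\Rm}_\infty|(y_\infty,0)=1$). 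In short, you cannot appeal to Hamilton's compactness theorem without first supplying the very uniform lower injectivity radius bound that you acknowledge you do not have.

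The paper sidesteps this entirely: instead of taking a Cheeger--Gromov--Hamilton limit of the rescaled manifolds, it lifts the rescaled flow $\tilde g_k(t)$ to a fixed Euclidean ball $B_{euc}(\rho)$ via the exponential map of $\tilde g_k(0)$ at $\tilde x_k$, using only the \emph{conjugate radius} lower bound $\rho>0$ coming from the curvature bound $|\Rm_{\tilde g_k}|\leq 4$. No non-collapsing is needed to extract a smooth limit on the Euclidean ball by Shi's interior estimates. The limit is Einstein with non-negative constant, hence of constant positive sectional curvature, giving $\Ric(\tilde g_k(0))>2\alpha$ near $\tilde x_k$ for large $k$. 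The paper then propagates this lower Ricci bound to an arbitrarily large ball $B_{\tilde g_k(0)}(\tilde x_k,r)$ via a covering by balls of radius $\rho/100$ (bounding the number of such balls by volume comparison, lifting at each center, and using the overlaps to match the constant), and finally derives the contradiction from the proof of Bonnet--Myers: if $\Ric>2\alpha$ on $B(\tilde x_1,2L)$ with $L=2\pi/\sqrt{\alpha}$, then $M_1=B(\tilde x_1,L)$ is compact. So the closing compactness argument is in the same spirit as yours, but it lives on the actual manifold $M_k$ via Bonnet--Myers rather than on a putative compact Cheeger--Gromov limit, precisely because the latter was never constructed. To repair your proposal you would need to replace the appeal to Hamilton's compactness theorem and no-local-collapsing with the conjugate-radius lifting trick.
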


\begin{proof}
The proof of the curvature estimate follows the point-picking argument
from \cite[Lemma 2.1]{SimonTopping2016}.

Suppose the conclusion is false, then 
there exist $\sigma\in (1,2)$ and $L>0$ such that 
for any $a_k\rightarrow +\infty$, there exists a sequence of three-dimensional non-compact manifolds $M_k$, Ricci flows $g_k(t)$, $t\in [0,T_k]$ on $M_k$ and points $x_k\in M_k$ 
satisfying the hypotheses of the lemma but 
so that the curvature conclusion fails in an arbitrarily short time. We may assume $a_kT_k\rightarrow 0$. By smoothness of each Ricci flow, we can choose $t_k\in (0,T_k]$ so that
\begin{enumerate}
\item[(i)] $B_{g_k(t)}(x_k,1)\Subset M_k$ for $t\in [0,t_k]$;
\item[(ii)] $\cR(g_k(t))> -4$ on $B_{g_k(t)}(x_k,1)$, $t\in [0,t_k]$;
\item[(iii)]$\left|\Ric(g_k(t))-\frac13 \cR(g_k(t))g_k(t)\right|^2\leq Lt^{\sigma-2}  (\cR(g_k(t))+L)^\sigma$ \\ on $B_{g_k(t)}(x_k,1)$ for $t\in [0,t_k]$;
\item[(iv)]$|\Rm(g_k(t))|(x_k)<a_k t^{-1}$ 
for $t\in (0,t_k)$;
\item[(v)]$|\Rm(g_k(t_k))|(x_k)=a_k t_k^{-1}$.
\end{enumerate}


\bigskip

By (iv) and the fact that $a_kt_k\rightarrow 0$, \cite[Lemma 5.1]{SimonTopping2016} implies that for $k$ sufficiently large, we can find $\b>0,\tilde t_k\in (0,t_k]$ and $\tilde x_k\in B_{g_k(\tilde t_k)}(x_k,\frac{3}{4}-\frac{1}{2}\b\sqrt{a_k \tilde t_k})$ such that 
\begin{equation}\label{CURVATURE}
|\Rm(g_k(x,t))|\leq 4|\Rm(g_k(\tilde x_k,\tilde t_k))|=4Q_k
\end{equation}
whenever $d_{g_k(\tilde t_k)}(x,\tilde x_k)<\frac{1}{8}\b a_kQ_k^{-1/2}$ and $\tilde t_k-\frac{1}{8}a_kQ_k^{-1}\leq t\leq \tilde t_k$ where $\tilde t_k Q_k\geq a_k\rightarrow +\infty$.\bigskip

Consider the parabolic rescaling centred at $(\tilde x_k,\tilde t_k)$, namely $\tilde g_k(t)=Q_k g_k(\tilde t_k+Q_k^{-1}t)$ for $t\in [-\frac{1}{8}a_k,0]$ so that
\begin{enumerate}
\item[(a)] $|\Rm_{\tilde g_k(0)}(\tilde x_k)|=1$;
\item[(b)] $|\Rm_{\tilde g_k(t)}|\leq 4$ on $B_{\tilde g_k(0)}(\tilde x_k,\frac18 \b a_k)\times [-\frac18 a_k,0]$, and
\item[(c)]   on $B_{\tilde g_k(0)}(\tilde x_k,\frac18 \b a_k)$,
\begin{equation}
\left\{
\begin{array}{ll}
\displaystyle\cR(\tilde g_k(0))>-4Q_k^{-1};\\
\displaystyle \left|\Ric(\tilde g_k(0))-\frac13 \cR(\tilde g_k(0))\tilde g_k(0)\right|^2 \leq  La_k^{\sigma-2}( \cR(\tilde g_k(0))+LQ_k^{-1})^\sigma,
\end{array}
\right.
\end{equation}
\end{enumerate}
where we used that $\tilde t_k Q_k\geq a_k$ to obtain the final estimate.

It what follows, we tread carefully to accommodate the fact that we have no uniform lower bound on the injectivity radius of $\tilde g_k(0)$. 
By (b), we can pick a universal $\rho>0$ so that the conjugate radius 
is always larger than $\rho$.
Therefore we can lift $\left(B_{\tilde g_k(0)}(\tilde x_k,\rho),\tilde g_k(t) \right)$ to $(B_{euc}(\rho),\tilde g_k(t))$ by the exponential map of $\tilde g_{k}(0)$ at $\tilde x_k$. By Shi's estimate \cite{Shi1989}, we may assume $\tilde g_k(t)$ converges to $\tilde g_\infty(t)$  uniformly locally smoothly on the $B_{euc}(\rho)\times (-\infty,0]$
after passing to a subsequence. Using (c), we conclude that
\begin{equation}
\left\{
\begin{array}{ll}
\cR(\tilde g_\infty(0))&\geq 0;\\
\Ric(\tilde g_\infty(0))&=\frac13 \cR(\tilde g_\infty(0))\tilde g_\infty(0)
\end{array}
\right.
\end{equation}
and hence $\tilde g_\infty(0)$ is Einstein with non-negative Einstein constant on the Euclidean ball of radius $\rho$, thanks to the Bianchi identity,
and thus has constant non-negative sectional curvature.
By (a) and the smooth convergence, the constant sectional curvature must be positive and hence $\Ric(\tilde g_\infty(0))=3\a$ on the Euclidean ball of radius $\rho$ for some  
$\a>0$ that could be computed explicitly.

We now extend this control to a larger region. For any fixed $r>\rho$ and each $k$, 
pick a maximal disjoint collection of balls $B_{\tilde g_k(0)}(y^j_k,\frac{\rho}{100})$
within $B_{\tilde g_k(0)}(\tilde x_k,r)$,
indexed by $j$. By volume comparison, the number of points $y^j_k$ is bounded uniformly in $k$, and so by passing to a subsequence we may assume that $j$ always ranges from $1$ to some $J\in\mathbb{N}$. We thus have $J$ sequences $y^j_k$ that can be substituted for $\tilde x_k$
in the previous process of lifting by the exponential map and passing to a subsequence to give a limit that is of constant sectional curvature.
After passing to all $J$ subsequences, because of the overlaps between the covering balls
$B_{\tilde g_k(0)}(y^j_k,\rho)$ we deduce that each limit has the \emph{same} constant sectional curvature.
In particular, after deleting finitely many elements of the sequence
we may assume that 
$\Ric(\tilde g_k(0))> 2 \a$ on $B_{\tilde g_k(0)}(\tilde x_k,r)$ for all $k\in \mathbb{N}$.

But it is well understood that one cannot have a large region of a manifold with controllably positive Ricci curvature:
Set $L:=2\pi/\sqrt{\a}$ and take the subsequence constructed above corresponding to 
$r=2L$. 
Then  $\Ric(\tilde g_{1}(0))> 2\a$ on $B_{\tilde g_{1}(0)}(\tilde x_{1},2L)$. Suppose there exists $z\in B_{\tilde g_{1}(0)}(\tilde x_{1},2L)$ such that $d_{\tilde g_{1}(0)}(\tilde x_{1},z)=L$. Let $\gamma$ be a minimizing geodesic realizing $d_{\tilde g_{1}(0)}(\tilde x_{1},z)=L$; then $\gamma$ must lie inside $B_{\tilde g_{1}(0)}(\tilde x_{1},2L)$ where the Ricci curvature is bounded from below by $2\a$. 
By the proof of the Bonnet-Myers theorem, the Ricci curvature's lower bound implies $d_{\tilde g_{1}(0)}(\tilde x_{1},z)\leq \pi/\sqrt{\a}$ which is impossible. And hence, $B_{\tilde g_{1}(0)}(\tilde x_{1},L)=M_{1}$ which  contradicts that $B_{g_{1}(\tilde t_{1})}( x_{1},1)\Subset M_{1}$ and the non-compactness of $M_{1}$.
\end{proof}

\section{Ricci flow lemmas}

In this section we recall some useful results about Ricci flow. The first of these is a result of Hochard that allows us to modify an incomplete Riemannian metric at its extremities in order to make it complete, without increasing the curvature too much, and without changing it in the interior.

\begin{lma}[{Hochard \cite[{Corollaire IV.1.2}]{hochard_thesis}}]
There exists $\sigma(n)>1$ such that given a Riemannian manifold 
$(N^n,g)$ with  $|\Rm(g)|\leq \rho^{-2}$ throughout for some $\rho>0$, 
there exists a complete Riemannian metric $h$ on $N$ such that 
\begin{enumerate}
\item
$h\equiv g$ on $N_\rho:=\{x\in N: B_g(x,\rho)\Subset N\}$, and
\item
$|\Rm(h)|\leq \sigma\rho^{-2}$ throughout $N$.
\end{enumerate}
\end{lma}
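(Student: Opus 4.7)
The plan is to construct $h$ as a conformal modification $h = u^2 g$, with $u \equiv 1$ on $N_\rho$ and $u$ blowing up in the ``boundary layer'' where $(N,g)$ fails to be locally complete. After rescaling we may assume $\rho = 1$ and $|\Rm(g)| \leq 1$ throughout.

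First I would introduce the truncated boundary-distance function
\begin{equation*}
r(x) := \min\{1,\, \sup\{s \geq 0 : B_g(x,s) \Subset N\}\},
\end{equation*}
which is $1$-Lipschitz and satisfies $N_1 = \{r = 1\}$. Because $r$ is merely Lipschitz, I would smooth it at a scale comparable to $r$ itself: choose a maximal disjoint family of balls $B_g(x_i, r(x_i)/40)$, form a partition of unity $\{\chi_i\}$ subordinate to $\{B_g(x_i, r(x_i)/20)\}$ with $|\nabla^k \chi_i|_g \leq C_k\, r(x_i)^{-k}$ for $k=1,2$, and set $\tilde r(x) := \sum_i \chi_i(x)\, r(x_i)$. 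The $1$-Lipschitz property of $r$ gives $r(x_i) \sim r(x)$ on the support of each $\chi_i$, which yields $\tfrac12 r \leq \tilde r \leq 2 r$ together with $|\nabla \tilde r|_g \leq C$ and $|\nabla^2 \tilde r|_g \leq C/\tilde r$, with $\tilde r \equiv 1$ on $N_1$.

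Next I would fix a smooth non-increasing profile $F : (0,\infty) \to [1,\infty)$ with $F(s) = 1$ for $s \geq 3/4$ and $F(s) = 1/s$ for $s \leq 1/4$, and define $u := F \circ \tilde r$ and $h := u^2 g$. Conclusion (1) is immediate since $u \equiv 1$ on $N_1$. For completeness of $h$, any $h$-Cauchy sequence leaving every $g$-compact subset would force $r \to 0$ along a path of bounded $h$-length; but on such a path $u \sim 1/\tilde r \sim 1/r$, and the $1$-Lipschitz nature of $r$ makes the $h$-length dominate an integral of the form $\int_0^\delta ds/s = \infty$, a contradiction. Conclusion (2) follows from the standard conformal change formula
\begin{equation*}
|\Rm(h)|_h \leq u^{-2}\bigl(|\Rm(g)|_g + C_n\,|\nabla^2 \log u|_g + C_n\,|\nabla \log u|_g^2\bigr),
\end{equation*}
combined with $u \sim 1/\tilde r$, $|\nabla \log u|_g \lesssim 1/\tilde r$ and $|\nabla^2 \log u|_g \lesssim 1/\tilde r^2$: the factor $u^{-2} \sim \tilde r^2$ exactly cancels the singular contributions and produces a dimensional bound $|\Rm(h)|_h \leq \sigma(n)$.

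The main obstacle is the smoothing step, i.e.\ producing a $C^2$ approximation $\tilde r$ of $r$ with the correct scaling $|\nabla^2 \tilde r|_g \lesssim 1/\tilde r$ uniformly as $r \to 0$. A fixed mollification scale is useless near $\{r = 0\}$, so the construction relies on the scale-adapted partition of unity above, whose finite multiplicity is guaranteed by volume comparison under the curvature bound $|\Rm(g)| \leq 1$. Once this is in place, both the completeness check and the curvature estimate reduce to bookkeeping with the conformal change formula.
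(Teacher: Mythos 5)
The paper does not include a proof of this lemma --- it is quoted directly from Hochard's thesis and used as a black box (the paper only remarks, in Section 2, that ``one can modify the metric conformally in a boundary layer''). Your sketch reproduces exactly that mechanism, and the key cancellations are correct: $u^{-2}\sim \tilde r^{2}$ absorbs the $\tilde r^{-2}$ singularities in $|\nabla\log u|_g^2$ and $|\nabla^2\log u|_g$, while completeness follows from the $1$-Lipschitz property of $r$ together with the logarithmic divergence $\int_0^\delta ds/s=\infty$.

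Two remarks. First, your claim that $\tilde r\equiv 1$ on $N_1$ is a little too strong: for $x\in N_1$ the weights $r(x_i)$ appearing in $\tilde r(x)=\sum_i\chi_i(x)\,r(x_i)$ only satisfy $r(x_i)\geq 20/21$, so one obtains $\tilde r\geq 20/21$ on $N_1$, not $\tilde r=1$. This is harmless because you chose $F\equiv 1$ on $[3/4,\infty)$, but it should be stated correctly. Second, and more substantively, the construction of cutoffs $\chi_i$ with $|\nabla^k\chi_i|_g\lesssim r(x_i)^{-k}$ up to $k=2$ --- which you rightly flag as the main obstacle --- is genuinely delicate in the absence of any injectivity radius lower bound: the bound $|\Rm(g)|\leq 1$ controls the conjugate radius, but $\exp_{x_i}$ may fail to be injective on $B(0,r(x_i)/20)\subset T_{x_i}N$, so one cannot simply transport a Euclidean bump through $\exp_{x_i}^{-1}$. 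A standard repair is to sum a Euclidean bump over all preimages under $\exp_{x_i}$; the finite multiplicity of that sum, like the finite multiplicity of the covering, follows from Bishop--Gromov, which (as you correctly observe) does not require an injectivity radius input since one compares concentric metric balls rather than a metric ball to a Euclidean one. Supplying this last step carefully is really the content of Hochard's argument, and your write-up passes over it; everything else in your sketch is sound and matches the cited source.
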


An immediate consequence of Hochard's lemma, via Shi's existence theorem for Ricci flow starting with complete initial metrics of bounded curvature \cite{Shi1989}, is the 
following local existence result for Ricci flow.
\begin{prop}\label{construction-local-RF-new}
There exist constants $\alpha(n)\in (0,1]$ and $\Lambda(n)>1$ so that the following is true.
Suppose $(N^n,h_0)$ is a smooth manifold (not necessarily complete)
that satisfies $|\Rm(h_0)|\leq \rho^{-2}$ throughout, for some $\rho>0$.
Then there exists a smooth Ricci flow $h(t)$ on $N$
for $t\in [0,\alpha \rho^2]$, with the properties that 
\begin{enumerate}
\item[(i)] $h(0)=h_0$ on $N_\rho=\{x\in N: B_{h_0}(x,\rho)\Subset N\}$;
\item[(ii)] $|\Rm(h(t))|\leq  \Lambda \rho^{-2}$ throughout $N\times [0,\alpha \rho^2]$.
\end{enumerate}
\end{prop}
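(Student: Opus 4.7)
The proof is essentially a direct concatenation of Hochard's completion lemma with Shi's short-time existence theorem, as the paper already hints. My plan is to first complete $h_0$ to a globally defined, bounded-curvature metric $h$ on $N$ using Hochard's lemma, run Shi's flow from $h$, and then restrict attention to $N_\rho$ where nothing has been modified.

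In detail, apply Hochard's lemma above to $(N^n,h_0)$ with the given scale $\rho$ to produce a complete Riemannian metric $h$ on $N$ with $h\equiv h_0$ on $N_\rho$ and $|\Rm(h)|\leq\sigma(n)\rho^{-2}$ throughout $N$. Setting $\tilde\rho:=\rho/\sqrt{\sigma}$, this is equivalent to $|\Rm(h)|\leq\tilde\rho^{-2}$, so $h$ is a complete metric on $N$ with a uniform curvature bound at scale $\tilde\rho$.

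Next, Shi's theorem \cite{Shi1989} produces a smooth complete solution $h(t)$ of the Ricci flow on $N$ with $h(0)=h$, defined at least on $[0,\alpha_0\tilde\rho^{\,2}]$ for some universal $\alpha_0(n)>0$, and satisfying a global curvature estimate of the form $|\Rm(h(t))|\leq \Lambda_0\tilde\rho^{-2}$ for some $\Lambda_0(n)>1$. Undoing the rescaling by setting $\alpha:=\alpha_0/\sigma$ and $\Lambda:=\Lambda_0\sigma$ (both depending only on $n$), the flow is defined on $[0,\alpha\rho^{2}]$ and satisfies
\begin{equation*}
|\Rm(h(t))|\leq \Lambda\rho^{-2}
\end{equation*}
on $N\times[0,\alpha\rho^{2}]$, establishing (ii). Conclusion (i) is automatic since $h(0)=h\equiv h_0$ on $N_\rho$ by construction of the completion.

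Because every quantitative step is already packaged in the two cited theorems, there is no real analytic obstacle to negotiate; the only care needed is in tracking how the scale $\rho$ transforms through Hochard's lemma so that the final constants $\alpha$ and $\Lambda$ depend only on the dimension $n$ and not on $\rho$ or the manifold. I would record the argument as a short deduction rather than a self-contained proof.
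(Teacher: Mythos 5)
Your argument is exactly the deduction the paper has in mind: the authors explicitly describe Proposition~\ref{construction-local-RF-new} as an ``immediate consequence'' of Hochard's completion lemma followed by Shi's short-time existence theorem, and your proposal just makes the scale bookkeeping explicit. The only minor cosmetic point is that you may want to replace $\alpha:=\alpha_0/\sigma$ by $\alpha:=\min\{\alpha_0/\sigma,1\}$ to conform to the stated constraint $\alpha\in(0,1]$, but the substance is correct and identical to the paper's approach.
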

When applying this lemma, we will only be interested in the restriction of $h(t)$ to the subset $N_\rho$.



We also recall the shrinking balls lemma,
which is one of the local ball inclusion results 
based on  the distance distortion estimates of Hamilton and Perelman from  \cite[Lemma 8.3]{Perelman2002}.
\begin{lma}[{\cite[Corollary 3.3]{SimonTopping2016}}]
\label{l-balls}
 There exists a constant $\beta=\beta(n)\geq 1$ depending only on $n$ such that the following is true. Suppose $(N^n,g(t))$ is a Ricci flow for $t\in [0,S]$ and $x_0\in N$   with $B_0(x_0,r)\Subset N$ for some $r>0$, and $\Ric(g(t))\leq a/t$ on $B_{g_0}(x_0,r)$ for each $t\in (0,S]$. Then
$$B_t\left(x_0,r-\beta\sqrt{a t}\right)\subset B_{g_0}(x_0,r).$$
\end{lma}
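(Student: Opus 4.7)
The plan is to deduce the inclusion from Perelman's distance distortion estimate \cite[Lemma 8.3]{Perelman2002}, which is the natural tool for controlling how quickly $d_t(x_0,\cdot)$ shrinks under an upper Ricci bound. Perelman's lemma asserts that if $d_{t_0}(x_0, y) \geq 2 r_0$ and $\Ric(g(t_0)) \leq (n-1)K$ on $B_{t_0}(x_0, r_0) \cup B_{t_0}(y, r_0)$, then in the forward barrier sense
\[
\frac{d_+}{dt}\bigg|_{t_0} d_t(x_0, y) \geq -2(n-1)\left(\tfrac{2}{3} K r_0 + r_0^{-1}\right).
\]
Under the hypothesis $\Ric\leq a/t$, I would set $r_0 := \sqrt{t_0/a}$ and $K := a/((n-1)t_0)$, balancing the two terms so the right-hand side becomes $-C(n)\sqrt{a/t_0}$. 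Since $t^{-1/2}$ is integrable near $0$, integrating yields
\[
d_t(x_0, y) \geq d_0(x_0, y) - 2C(n)\sqrt{at},
\]
and then selecting $\beta(n):= 2C(n)+1$ gives the contrapositive form of the claim: if $d_t(x_0, y) < r - \beta\sqrt{at}$, then $d_0(x_0, y) < r$.

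The subtlety is that Perelman's lemma requires the Ricci upper bound on the time-$t$ balls $B_t(x_0, r_0) \cup B_t(y, r_0)$, whereas our hypothesis only provides it on $B_0(x_0, r)$. I would close this gap with a continuity bootstrap. Let
\[
T^* := \sup\{s \in [0,S] : B_\tau(x_0, r-\beta\sqrt{a\tau}) \subset B_0(x_0, r) \text{ for every } \tau \in [0, s]\},
\]
which is strictly positive since $B_0(x_0, r) \Subset N$ and distances vary continuously in $t$. Assuming $T^* < S$ for contradiction, on $[0, T^*]$ the relevant balls lie inside $B_0(x_0, r)$ where the Ricci upper bound applies; the integrated distance estimate above then provides a strict margin in the inclusion at $t = T^*$, which short-time continuity of the flow-distance extends past $T^*$, contradicting the supremality of $T^*$.

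The principal obstacle is the careful execution of this continuity argument, namely verifying that at each $\tau \in (0, T^*]$ the minimizing $g(\tau)$-geodesic from $x_0$ to any relevant point $y$, together with the small balls of radius $r_0 = \sqrt{\tau/a}$ around its endpoints, lies within $B_0(x_0, r)$ so that Perelman's estimate may be applied. A secondary issue is the degenerate case $d_t(x_0, y) < 2r_0$ of Perelman's lemma: here the distance is already controlled by $2\sqrt{t/a}$, which can be absorbed by enlarging $\beta$ if needed. Once these technicalities are handled, the combination of Perelman's barrier bound with the continuity argument produces the claim with a universal constant $\beta = \beta(n) \geq 1$.
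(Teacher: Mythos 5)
Your route — Perelman's distance distortion estimate with the balanced choice $r_0 = \sqrt{t_0/a}$, integrate the resulting $-C(n)\sqrt{a/t}$ lower bound on $\frac{d_+}{dt}d_t(x_0,\cdot)$, and close the loop with a continuity-in-time argument — is exactly the standard proof of the shrinking balls corollary, and matches the proof in the cited source \cite{SimonTopping2016}.

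One remark on the ``principal obstacle'' you flag: it is in fact less delicate than you suggest, provided you unwind the proof of Perelman's Lemma 8.3 rather than invoking it as a black box. The forward time derivative of $d_t(x_0,y)$ equals $-\int_\gamma \Ric(\gamma',\gamma')\,ds$ over a minimizing $g(\tau)$-geodesic $\gamma$, and the second-variation inequality $\int_\gamma \phi^2\Ric(\gamma',\gamma')\,ds \le (n-1)\int_\gamma|\phi'|^2\,ds$ holds for any cutoff $\phi$ vanishing at the endpoints. The only place an upper Ricci bound enters is in estimating $\int_\gamma(1-\phi^2)\Ric(\gamma',\gamma')\,ds$, and this integral is over segments of $\gamma$ of length $r_0$ near its endpoints, not over metric balls about the endpoints. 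Since $\gamma$ lies entirely inside $B_\tau\bigl(x_0, d_\tau(x_0,y)\bigr) \subset B_\tau\bigl(x_0, r-\beta\sqrt{a\tau}\bigr)$, which by the inductive hypothesis sits inside $B_0(x_0,r)$ where $\Ric \le a/\tau$, the bound is available exactly where you need it — no separate control of $B_\tau(y,r_0)$ is required, and there is no danger of that ball ``sticking out.'' (The balls in Perelman's formulation are simply there to guarantee the near-endpoint geodesic segments have bounded curvature; here the whole geodesic is already captured.) With that observation your continuity argument closes cleanly: fix $y$ with $d_{T^*}(x_0,y)$ strictly less than $r-\beta\sqrt{aT^*}$, run the derivative bound backward in time on the maximal interval where $d_t(x_0,y) < r-\beta\sqrt{at}$, and conclude $d_0(x_0,y) < r$ once $\beta > 2C(n)$. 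Your treatment of the short-distance regime $d_t(x_0,y) < 2r_0$ by absorbing it into $\beta$ is the right instinct.
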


\section{Existence of Ricci flow}

In this section, we will construct the Ricci flow starting from complete non-compact manifolds with non-negatively pinched Ricci curvature. More generally, we will construct the partial Ricci flow which is based on the ideas from the work of Hochard \cite{Hochard2016}, Simon and the second named author \cite{SimonTopping2021}.
\begin{thm}\label{Thm:Construct-partialRF}
For all $\frac1{12}>\e>0$, there exist $T(\e),a_0(\e)>0$ such that the following holds.
Suppose $(M^3,g_0)$ is a three-dimensional manifold and $p\in M$ so that
\begin{enumerate}
\item[(i)] $B_{g_0}(p,R+4)\Subset M$ for some $R\geq1$; 
\item[(ii)]$\Ric(g_0)\geq \e \cR(g_0)\geq 0$ on $B_{g_0}(p,R+4)$.
\end{enumerate}
Then there exists a smooth Ricci flow solution $g(t)$ defined on $B_{g_0}(p,R)\times [0,T]$ such that $g(0)=g_0$, $|\Rm(x,t)|\leq a_0t^{-1}$ and 
\begin{equation}
\label{ape}
\Ric(x,t)\geq \e\cR(x,t)-1
\end{equation}
for all $(x,t)\in B_{g_0}(p,R)\times (0,T]$.
\end{thm}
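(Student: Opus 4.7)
The plan is to implement the iterative-restart strategy of Hochard \cite{Hochard2016} and Simon-Topping \cite{SimonTopping2021}, in which Proposition \ref{construction-local-RF-new} is used repeatedly to extend the Ricci flow, and the a-priori estimates from Section \ref{apriorisect} are used after each restart to absorb the $\Lambda$-factor and restore an $a_0/t$ curvature bound with a constant $a_0$ depending only on $\e$. The point is that once an $a_0/t$ curvature bound is in place at a time $t^\ast$, Proposition \ref{construction-local-RF-new} yields an extension for a further time that is a fixed proportion of $t^\ast$, so a finite (geometric) iteration with constants depending only on $\e$ reaches any prescribed time $T=T(\e)$.

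To choose the constants, let $\Lambda,\alpha$ be the universal constants from Proposition \ref{construction-local-RF-new}, let $\beta$ be the constant from Lemma \ref{l-balls}, and let $\sigma(\e), L_1(\e)$ and $a_1(\e):=a_1(\sigma(\e),L_1(\e))$ denote the constants produced respectively by Lemmas \ref{SecEst-pinching-Implies-spherical} and \ref{SecEst-pinching-curvatureEstimate}. Pick $a_0=a_0(\e)$ so large that $a_1\le a_0/2$, put $a':=\Lambda(a_0+1)$, and then pick $T=T(\e)\in (0,1]$ smaller than each of $S_1(a',\e)$, $S_2(a',\e)$, $S_3(\e)$ from Lemmas \ref{SecEst-almost-pinching}--\ref{SecEst-pinching-curvatureEstimate}, and small enough that $\beta\sqrt{a_0 T}\le 1$. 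Let $T^\ast\in (0,T]$ be the supremum of $s\in (0,T]$ for which there is a smooth Ricci flow $g(t)$ defined on an open subset $U_s\subset M$ containing $\overline{B_{g_0}(p,R+3)}$, for $t\in[0,s]$, satisfying $g(0)=g_0|_{U_s}$, $|\Rm(g(t))|\le a_0/t$ on $B_{g_0}(p,R)\times(0,s]$, and the pinching estimate \eqref{ape} on the same set.

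Since $\overline{B_{g_0}(p,R+3)}\subset M$ is compact and $g_0$ is smooth, $|\Rm(g_0)|$ is bounded on a neighborhood of it, so Proposition \ref{construction-local-RF-new} combined with Lemma \ref{SecEst-almost-pinching} gives $T^\ast>0$. Suppose for contradiction that $T^\ast<T$, and pick $t^\ast$ slightly below $T^\ast$. The bound $|\Rm(g(t^\ast))|\le a_0/t^\ast$ allows us to apply Proposition \ref{construction-local-RF-new} at time $t^\ast$ at scale $\rho=(t^\ast/a_0)^{1/2}$, extending the flow past $T^\ast$ (to absolute time $t^\ast+\alpha t^\ast/a_0$) with $|\Rm|\le \Lambda a_0/t^\ast\le a'/t$ on the extension. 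For each $x\in B_{g_0}(p,R)$, the $4$-unit buffer between $B_{g_0}(p,R)$ and $\partial B_{g_0}(p,R+4)\Subset M$, together with Lemma \ref{l-balls} and $\beta\sqrt{a_0 T}\le 1$, ensures $B_t(x,1)\Subset M$ for $t\in[0,T]$ and $B_0(x,1)\subset B_{g_0}(p,R+1)$, so the hypotheses of Lemmas \ref{SecEst-almost-pinching}, \ref{SecEst-pinching-Implies-spherical}, and \ref{SecEst-pinching-curvatureEstimate} hold at $x$. Applied in succession with $a=a'$, they yield \eqref{ape}, the almost-Einstein estimate of Lemma \ref{SecEst-pinching-Implies-spherical}, and finally the sharpened bound $|\Rm(x,t)|\le a_1/t\le a_0/(2t)$ on the extended interval. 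This verifies the bootstrap conditions strictly past $T^\ast$, contradicting maximality; taking $s=T$ in the definition of $T^\ast$ produces the Ricci flow required.

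The main obstacle is the constant calibration: after Proposition \ref{construction-local-RF-new} extends with the worse bound $\Lambda a_0/t^\ast$, one needs Lemma \ref{SecEst-pinching-curvatureEstimate} to yield a curvature bound $a_1/t$ whose constant $a_1$ is \emph{independent} of the input curvature constant and hence strictly less than $a_0$. This independence is precisely what the scale-invariant pinching hypothesis delivers, and it is what allows $a_0$ to be fixed upfront in terms of $\e$ alone. A secondary technical difficulty is keeping track of the spatial domain through repeated restarts and making the unit-ball hypotheses of the a-priori lemmas available at every $x\in B_{g_0}(p,R)$, not only at a chosen centre; this is handled by the $4$-unit buffer built into hypothesis (i) of the theorem together with the shrinking-balls Lemma \ref{l-balls}.
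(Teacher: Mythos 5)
The overall strategy you describe (restart via Proposition~\ref{construction-local-RF-new}, absorb the $\Lambda$-factor with the a-priori lemmas, iterate for a time $T(\e)$) is the right one and matches the paper's. But your supremum argument, as written, has a structural gap: it does not control the spatial domain across restarts, and this is precisely the delicate quantitative point that the paper's explicit induction is designed to handle.

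Concretely, each time you apply Proposition~\ref{construction-local-RF-new} at scale $\rho=(t^\ast/a_0)^{1/2}$, the resulting flow agrees with the prior flow only on the \emph{strict} subset $N_\rho=\{x: B_{g(t^\ast)}(x,\rho)\Subset N\}$, and the set $N$ on which you may even apply the proposition is limited to where you have the $a_0/t^\ast$ curvature bound, namely $B_{g_0}(p,R)$. Thus the extended flow lives on a set contained in $B_{g_0}(p,R)$ with a boundary layer of definite width $\sim\rho$ removed. This set certainly does not contain $\overline{B_{g_0}(p,R+3)}$, which is the requirement you built into the definition of $T^\ast$. So the extension does \emph{not} produce a competitor for the supremum, and the claimed contradiction with maximality does not materialize. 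The $4$-unit buffer alone cannot save this: over the (uncontrollably many) restarts needed to reach a definite time $T(\e)$ starting from a flow whose initial existence time $\alpha\rho_1^2$ depends on the unknown curvature of $g_0$, the per-restart shrinkages $\sim\sqrt{t_k/a_0}$ must be \emph{summed}. The paper handles this by choosing restart times $t_{k+1}=(1+\mu)^2 t_k$ in geometric progression and radii $r_{k+1}=r_k-\Lambda_0\sqrt{a_0 t_k}$, so that $\sum_k\sqrt{t_k}$ is a geometric series controlled by $\sqrt{t_{\text{last}}}$; the uniform $T(\e)$ is read off from the requirement that this sum not exhaust the buffer. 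A supremum argument can in principle be made to work, but only if the domain requirement in the definition of $T^\ast$ is itself a quantitative function of $s$ (e.g.\ demanding existence on $B_{g_0}(p,R+3-C(\e)\sqrt{s})$), which then re-introduces exactly the bookkeeping you tried to avoid.

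A secondary issue: Lemmas~\ref{SecEst-almost-pinching}--\ref{SecEst-pinching-curvatureEstimate} are stated on unit balls with a time constraint $t\le S_i$; in the paper they are applied after parabolic rescaling so that the relevant ball becomes a unit ball and the time interval is compressed into $[0,16\Lambda_0^{-2}a_0^{-1}]\subset[0,S_i]$ (this is why the constant $\Lambda_0$ is built from $S_1^{-1/2},S_2^{-1/2},S_3^{-1/2}$). Simply requiring $T\le S_i$, as you do, does not ensure the lemmas' hypotheses hold at the intermediate scales and on the intermediate balls that arise during the iteration. Also note that the hypothesis (ii) of Lemma~\ref{SecEst-pinching-Implies-spherical} requires the almost-pinching on the full (time-dependent) unit ball around each point, so Lemma~\ref{SecEst-almost-pinching} must first be run on a slightly larger region before Lemma~\ref{SecEst-pinching-Implies-spherical} can be invoked -- the paper achieves this with the nested radii $r_{k+1}+\tfrac12\Lambda_0\sqrt{a_0t_k}$, $r_{k+1}+\tfrac14\Lambda_0\sqrt{a_0t_k}$, $r_{k+1}+\tfrac18\Lambda_0\sqrt{a_0t_k}$. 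Your proposal asserts the lemmas ``apply in succession'' without this nesting.
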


\begin{proof}
We begin by specifying the constants that will be used in the proof. 
For the given pinching constant $\e$, we will use:
\begin{itemize}
\item $\Lambda(3)>1$ and $\a(3)\in (0,1]$:  from Proposition~\ref{construction-local-RF-new}; 
\item $\b(3)$: from Lemma~\ref{l-balls};
\item $L_1(\e)>0,\sigma(\e)\in (1,2)$: from Lemma~\ref{SecEst-pinching-Implies-spherical};
\item $a_1(\sigma,L_1),S_3(\sigma,L_1)$: from Lemma~\ref{SecEst-pinching-curvatureEstimate};
\item $a_0:=\max\{\Lambda \a, \Lambda (a_1+\a),1\}$;
\item $S_1(a_0,\e)$: from Lemma~\ref{SecEst-almost-pinching};
\item $S_2(a_0,\e)$: from Lemma~\ref{SecEst-pinching-Implies-spherical};
\item $\Lambda_0:=16\cdot \max\{ (a_0a_1)^{-1/2},\b, S_1^{-1/2}, S_2^{-1/2}, S_3^{-1/2}\}$;
\item $\mu=\sqrt{1+\a a_1^{-1}}-1>0$.
\end{itemize}
\bigskip

Choose $1\geq \rho>0$ sufficiently small so that for all $x\in B_{g_0}(p,R+4)$, we have $|\Rm(g_0)|\leq \rho^{-2}$.  By Proposition~\ref{construction-local-RF-new},
applied with $N=B_{g_0}(p,R+4)$, 
we can find a smooth solution $g(t)$ to the Ricci flow on (a superset of)
$B_{g_0}(p,R+3)\times [0,\a \rho^2]$ with $|\Rm(g(t))|\leq \Lambda\rho^{-2}$
and (once restricted to $B_{g_0}(p,R+3)$) with
initial data $g_0$.  
Because $ a_0\geq \Lambda \a$, the curvature bound can be weakened to
$|\Rm(g(t))|\leq a_0 t^{-1}$.

\bigskip

We now define sequences of times $t_k$ and radii $r_k$ inductively as follows:
\begin{enumerate}
\item[(a)] $t_1=\a \rho^2,\;r_1=R+3$, where $\rho$ is obtained from above;
\item [(b)] $t_{k+1}=(1+\mu)^2 t_k=(1+\a a_1^{-1})t_k $ for $k\geq 1$;
\item [(c)] $r_{k+1}=r_k-\Lambda_0 \sqrt{a_0 t_k}$ for $k\geq 1$.
\end{enumerate}

Let $\mathcal{P}(k)$ be the following statement: there is a smooth Ricci flow solution $g(t)$ defined on $B_{g_0}(p,r_k)\times [0,t_k]$ with $g(0)=g_0$ such that $|\Rm(g(t))|\leq a_0t^{-1}$. Noted that we choose $\rho$ so that $\mathcal{P}(1)$ is true. Our goal is to show that $\mathcal{P}(k)$ is true for all $k$ so that $r_k>0$.
\bigskip

We now perform an inductive argument. Suppose $\mathcal{P}(k)$ is true, that is to say that $\exists g(t)$ on $B_{g_0}(p,r_k)\times [0,t_k]$ with $|\Rm(g(t))|\leq a_0t^{-1}$. We want to show that $\mathcal{P}(k+1)$ is true provided that $r_{k+1}>0$.\bigskip

Let $x\in  B_{g_0}(p,r_{k+1}+\frac12 \Lambda_0 \sqrt{a_0t_k})$ so 
\begin{equation}
B_{g_0}\left(x,\frac14 \Lambda_0 \sqrt{a_0t_k}\right)\Subset B_{g_0}(p,r_k).
\end{equation}
Consider the rescaled Ricci flow $\tilde g(t)=\lambda_1^{-2} g(\lambda_1^{2}t),t\in [0,16\Lambda_0^{-2}a_0^{-1}]$ where $\lambda_1=\frac14 \Lambda_0 \sqrt{a_0t_k}$ so that $B_{g(0)}(x,\frac14 \Lambda_0 \sqrt{a_0t_k})=B_{\tilde g(0)}(x,1)$. Since $\Ric(\tilde g(0))\geq \e \cR(\tilde g(0))$ on  $B_{\tilde g(0)}(x,1)$ and $|\Rm(\tilde g(t))|\leq a_0t^{-1}$ on $B_{\tilde g(0)}(x,1)$ for $t\in (0,16\Lambda_0^{-2}a_0^{-1}]$, we can apply Lemma~\ref{SecEst-almost-pinching} to $\tilde g(t)$ to conclude that

$$\Ric(\tilde g(x,t))\geq \e \cR(\tilde g(x,t))-1$$
for $t\in [0,16\Lambda_0^{-2}a_0^{-1}]$ since 
$ 16\Lambda_0^{-2}\leq S_1$ and $a_0\geq 1$. 
Rescaling back to $g(t)$, we see that on $B_{g_0}\left(p,r_{k+1}+\frac12 \Lambda_0 \sqrt{a_0t_k}\right)\times [0,t_k]$, we have
\begin{equation}\label{intermediate-bound-1}
\Ric(g(t))\geq \e \cR(g(t))-\left(\frac14 \Lambda_0 \sqrt{a_0t_k}\right)^{-2}.
\end{equation}

Next we aim to prove the estimate \eqref{intermediate-bound-2} below on the ball
$B_{g_0}(p,r_{k+1}+\frac14\Lambda_0 \sqrt{a_0t_k})$.
Fix $x\in B_{g_0}(p,r_{k+1}+\frac14\Lambda_0 \sqrt{a_0t_k})$.
By the shrinking balls lemma \ref{l-balls} and the assumption 
$\Lambda_0\geq 8\b$, we 
have the inclusions
$$\textstyle B_{g(t)}\left(x,\frac1{8} \Lambda_0\sqrt{a_0t_k}\right)
\subset B_{g_0}\left(x,\frac14 \Lambda_0\sqrt{a_0t_k}\right)\Subset 
B_{g_0}\left(p,r_{k+1}+\frac12\Lambda_0 \sqrt{a_0t_k}\right)$$
for all $t\in [0,t_k]$, where \eqref{intermediate-bound-1} holds.
Consider the rescaled Ricci flow $\hat g(t)=\lambda_2^{-2}g(\lambda_2^2t),t\in [0,8^2\Lambda_0^{-2}a_0^{-1}]$ where $\lambda_2=\frac18\Lambda_0 \sqrt{a_0t_k}$ so that $B_{g(\lambda_2^{2}t)}(x,\frac18 \Lambda_0 \sqrt{a_0t_k})=B_{\hat g(t)}(x,1)$. 
Under the rescaling, the pinching estimate \eqref{intermediate-bound-1}
becomes 
$$\Ric(\hat g(t))\geq \e \cR(\hat g(t))- \frac14 \geq \e \cR(\hat g(t))-1.$$
Keeping in mind the $a_0/t$ curvature decay we can apply Lemma~\ref{SecEst-pinching-Implies-spherical} over the whole time interval $[0,8^2\Lambda_0^{-2}a_0^{-1}]$ because 
the assumptions 
$ 8^2\Lambda_0^{-2}\leq S_2$ and $a_0\geq 1$ imply that
$S_2\geq 8^2\Lambda_0^{-2}a_0^{-1}$.
Rescaling the conclusion of Lemma~\ref{SecEst-pinching-Implies-spherical}
back to $g(t)$ shows that 
\begin{equation} \label{intermediate-bound-2}
\left|\Ric-\frac13 \cR g\right|^2\leq \frac{L_1}{t^{2-\sigma}} \cdot \left[\cR+4\left( \frac18 \Lambda_0\sqrt{a_0t_k}\right)^{-2}\right]^\sigma
\end{equation}
on $B_{g_0}(p,r_{k+1}+\frac14 \Lambda_0 \sqrt{a_0t_k})\times [0,t_k]$ as required.

\bigskip

Next we would like to establish $a_1/t$ decay of curvature on the ball
$B_{g_0}(p,r_{k+1}+\frac18 \Lambda_0\sqrt{a_0t_k})$ for all $t\in (0,t_k]$. 
Pick an arbitrary point $x\in B_{g_0}(p,r_{k+1}+\frac18 \Lambda_0\sqrt{a_0t_k})$ 
where we would like this decay to hold. 
By the shrinking balls lemma \ref{l-balls} and the assumption 
$\Lambda_0\geq 16\b$, we 
have the inclusions
$$\textstyle B_{g(t)}\left(x,\frac1{16} \Lambda_0\sqrt{a_0t_k}\right)
\subset B_{g_0}\left(x,\frac18 \Lambda_0\sqrt{a_0t_k}\right)\Subset 
B_{g_0}\left(p,r_{k+1}+\frac14\Lambda_0 \sqrt{a_0t_k}\right)$$
for all $t\in [0,t_k]$. 
Thus rescaling the flow to 
$\lambda_3^{-2}g(\lambda_3^2t)$, where $\lambda_3=\frac1{16} \Lambda_0\sqrt{a_0t_k}$,
which lives for $t\in [0,(16)^2\Lambda_0^{-2}a_0^{-1}]$, we can 
apply Lemma~\ref{SecEst-pinching-curvatureEstimate}, using 
\eqref{intermediate-bound-2} and \eqref{intermediate-bound-1},
to deduce that 
\begin{equation}\label{intermediate-bound-3}
|\Rm(x,t)|\leq a_1t^{-1}
\end{equation}
for all $t\in (0,t_k]$ provided that $S_3\geq (16)^2\Lambda_0^{-2}a_0^{-1}$, and this indeed holds by our assumptions that $ 16^2\Lambda_0^{-2}\leq S_3$ and  $a_0\geq 1$.

\bigskip

Denote $\Omega=B_{g_0}\left(p,r_{k+1}+\frac18\Lambda_0 \sqrt{a_0t_k}\right)$ so that for $h_0=g(t_k)$, estimate \eqref{intermediate-bound-3} gives $\sup_{\Omega}|\Rm(h_0)|\leq \rho^{-2}$ 
where $\rho=\sqrt{t_ka_1^{-1}}$. 
Moreover, for $x\in B_{g_0}(p,r_{k+1})$, 
the assumptions 
$ \Lambda_0\sqrt{a_0 a_1}\geq 16$ and $\Lambda_0\geq 16\beta$,
together with the shrinking balls lemma \ref{l-balls} (needing only the weaker $a_0/t$ decay rather than the $a_1/t$ decay that we have proved) give 
\begin{equation}
B_{g(t_k)}(x,\rho)\subset B_{g(t_k)}\left(x, \frac1{16}\Lambda_0\sqrt{a_0t_k}\right)
\subset B_{g_0}\left(x,\frac18\Lambda_0\sqrt{a_0t_k}\right)\Subset \Omega.
\end{equation}
This shows that $B_{g_0}(p,r_{k+1})\subset \Omega_{\rho}$, 
where $\Omega_{\rho}$ is computed with respect to $g(t_k)$. 
Hence, we may apply Proposition~\ref{construction-local-RF-new} to find a Ricci flow $g(t)$ on (a superset of) $B_{g_0}(p,r_{k+1})\times [t_k,t_k+\a \rho^2]$,
extending the existing $g(t)$ on this smaller ball, with 
\begin{equation}
|\Rm(g(t))|\leq \Lambda \rho^{-2}= \Lambda a_1t_k^{-1}\leq a_0t^{-1}
\end{equation}
since $ \Lambda (a_1+\a)\leq a_0$ and $t_k+\a \rho^2=t_k\left(1+\a a_1^{-1}\right)=t_{k+1}$. This shows that $\mathcal{P}(k+1)$ is true provided that $r_{k+1}>0$.

\bigskip

Since $\lim_{j\to +\infty}r_j=-\infty$, there is $i\in \mathbb{N}$ such that $r_i\geq R+1$ and $r_{i+1}<R+1$. In particular, $\mathcal{P}(i)$ is true since $r_i>0$. We now estimate $t_i$.
\begin{equation}
\begin{split}
R+1>r_{i+1}
&=r_1- \Lambda_0 \sqrt{a_0}\cdot \sum_{k=1}^i \sqrt{t_k} \\
&\geq R+3- \Lambda_0 \sqrt{a_0t_i}\cdot\sum_{k=0}^{\infty} (1+\mu)^{-k} \\
&=R+3- \sqrt{t_i}\cdot \frac{\Lambda_0 \sqrt{a_0}(1+\mu)}{\mu}.
\end{split}
\end{equation}

This implies 
\begin{equation}
t_i >\frac{4\mu^2 }{a_0 \Lambda_0^2(1+\mu)^2}=:T(\e).
\end{equation}
In other words, there exists a smooth Ricci flow solution $g(t)$ defined on $B_{g_0}(p,R+1)\times [0,T]$ so that $g(0)=g_0$ and $|\Rm(g(t))|\leq a_0t^{-1}$. 
That the almost pinching estimate \eqref{ape} holds at an arbitrary point
$x_0\in B_{g_0}(p,R)$ follows immediately from an application of
Lemma~\ref{SecEst-almost-pinching} on $B_{g_0}(x_0,1)$ provided we allow ourselves 
to reduce $T>0$. This completes the proof.
\end{proof}

\bigskip

By an exhaustion argument, we can prove Theorem~\ref{SecIntro-RFexist} now. 
\begin{proof}[Proof of Theorem~\ref{SecIntro-RFexist}]

By reducing $\e>0$ if necessary, we may assume that $\e\in (0,\frac{1}{100})$.
 Let $R_i\to +\infty$ and denote $h_{i,0}=R_i^{-2}g_0$ so that $\Ric(h_{i,0})\geq \e \cR(h_{i,0})$ on $M$. By Theorem~\ref{Thm:Construct-partialRF}, there is a Ricci flow solution $h_i(t)$ on $B_{h_{i,0}}(p,1)\times [0,T]$ with 
\begin{enumerate}
\item[(a)] $|\Rm(h_i(t))|\leq a_0t^{-1}$;
\item [(b)] $\Ric(h_i(t))\geq \e \cR(h_i(t))-1$.
\end{enumerate}

Define $g_i(t)=R_i^2 h_i(R_i^{-2}t)$ which is a Ricci flow solution on $B_{g_0}(p,R_i)\times [0,TR_i^2]$  with 
\begin{equation}
\left\{
\begin{array}{ll}
g_i(0)=g_0; \\
|\Rm(g_i(t))|\leq a_0 t^{-1};\\
\Ric(g_i(t))\geq \e \cR(g_i(t))-R_i^{-2}
\end{array}
\right.
\end{equation}
on each $B_{g_0}(p,R_i)\times (0,TR_i^2]$.

By \cite[Corollary 3.2]{Chen2009} (see also \cite{Simon2008}) and a modification of Shi's higher order estimate given in \cite[Theorem 14.16]{ChowBookII}, we infer that for any $k\in \mathbb{N}$, $S>0$ and $\Omega\Subset M$, we can find $C(k,\Omega,g_0,\e,S)>0$ so that for sufficiently large $i$ we have
\begin{align}
\sup_{\Omega\times [0,S]}|\nabla^k \mathrm{Rm}(g_i(t))|\leq C(k,\Omega,g_0,\e,S).
\end{align}
By working in coordinate charts and  applying the Ascoli-Arzel\`a Theorem, we may pass to a subsequence to obtain a smooth solution $g(t)=\lim_{i\rightarrow +\infty}g_i(t)$ of the Ricci flow on $M\times [0,+\infty)$ so that $g(0)=g_0$, $|\mathrm{Rm}(x,t)|\leq a_0t^{-1}$ and
\begin{equation}
\Ric(x,t)\geq \e \cR(x,t)
\end{equation}
 for all $(x,t)\in M\times[0,+\infty)$.
 Moreover, it is a complete solution by Lemma \ref{l-balls}.   
By tracing this pinching estimate, we deduce that $\cR\geq 0$.
 This completes the proof.
\end{proof}

\end{document}